\numberwithin{equation}{section}
\journal{?}
\begin{document}

\newtheorem{definition}{Definition}
\newtheorem{lemma}{Lemma}[section]
\newtheorem{remark}{Remark}[section]
\newtheorem{theorem}{Theorem}[section]
\newtheorem{proposition}{Proposition}[section]
\newtheorem{assumption}{Assumption}
\newtheorem{example}{Example}
\newtheorem{corollary}{Corollary}
\def\e{\varepsilon}
\def\Rn{\mathbb{R}^{n}}
\def\Rm{\mathbb{R}^{m}}
\def\Rd{\mathbb{R}^{d}}
\def\E{\mathbb{E}}
\def\P{\mathbb{P}}
\def\th{\theta}
\def\cC{{\mathcal C}}
\def\ssup{\sup_{t\in[0,T]}}
\numberwithin{equation}{section}

\begin{frontmatter}

\title{{\bf Homogenization of Dissipative Hamiltonian Systems under L\'evy Fluctuations
}}
\author{\centerline{\bf Zibo Wang$^{a,}\footnote{ zibowang@hust.edu.cn}$,
Li Lv$^{a,}\footnote{ Corresponding author: lilyu@hust.edu.cn }$,
and
Jinqiao Duan$^{b,}\footnote{duan@iit.edu}$}
\centerline{${}^a$ School of Mathematics and Statistics \& Center for Mathematical Sciences,}
\centerline{Huazhong University of Science and Technology, Wuhan 430074,  China}
\centerline{${}^b$ Department of Applied Mathematics,} \centerline{Illinois Institute of Technology, Chicago, IL 60616, USA}}

\begin{abstract}
  This work is devoted to deriving small mass limiting equation for a class of Hamiltonian systems with multiplicative L\'evy noise. Derivation of the limiting equation depends on the structure of the stochastic Hamiltonian systems, in which a noise-induced drift term arises. We prove convergence to the limiting equation in probability under appropriate assumptions on smoothness and boundedness. Furthermore, we demonstrate convergence in moment under stronger assumptions. A L\'evy type Smoluchowski-Kramers approximation result is presented as an illustrative example.
\end{abstract}

\begin{keyword}
 Homogenization; Hamiltonian systems; non-Gaussian L\'evy noise; noise-induced drift; small mass limit; effective reduction

\end{keyword}

\end{frontmatter}


\section{Introduction}

The motion of a diffusing particle of mass $m$ can be modeled by a stochastic differential equation (SDE)
\begin{equation*}
dq_t=v_tdt, \ \ \ \ mdv_t=-\gamma v_tdt+\sigma dW_t,
\end{equation*}
where $\gamma$ is the dissipation coefficient, $\sigma$ is the diffusion coefficient and $W$ is a Wiener process. The small mass limit problem was studied by Smoluchowski \cite{smoluchowski1916drei} and Kramers \cite{kramers1940brownian} when the mass $m \to 0$. Following their pioneering work, this subject has been investigated by a number of authors. For example, Nelson \cite{nelson2020dynamical} derived the limiting equation when $\gamma$ and $\sigma$ are constants and a Fokker-Planck equation approach was provided by Doering \cite{doering1990modeling}. Convergence in probability for $\gamma$ constant and $\sigma$ position-dependent was shown by Freidlin \cite{freidlin2004some}. For the infinite dimensional case, the problem was studied by Cerrai-Freidlin \cite{cerrai2006smoluchowski}. These above problems can be illustrated in the framework of homogenization, for which a splendid relevant reference is given \cite{pavliotis2008multiscale}.

Recently, the phenomenon of presence of noise-induced drift term in the small mass limit problem attracted wide attentions. It arises when the dissipation and diffusion coefficients depend on the state variable. Then there will be an additional drift term which does not appear in the original system. This phenomenon was firstly discovered by Hanggi \cite{hanggi1982nonlinear} for systems satisfying the fluctuation-dissipation relation. Then Volpe et al. \cite{volpe2010influence} made an experimental observation for this phenomenon. Hottovy et al. \cite{hottovy2015smoluchowski} derived the limiting equation of SDEs with arbitrary state-dependent friction. Birrell et al. developed small mass limit theory on compact Riemannian manifolds \cite{birrell2017small} and for Hamiltonian systems \cite{birrell2018homogenization}. A generalized homogenization theorem for Langevin systems was proved in \cite{birrell2019homogenization}. Lim et al. \cite{lim2020homogenization} discussed generalized Langevin equation for non-Markovian anomalous diffusions. We point out that most existing works mentioned above are for Gaussian noise.

However, random fluctuations in nonlinear dynamical systems are often non-Gaussian \cite{duan2015introduction}. The particle undergoing L\'evy superdiffusion is performing motion with random jumps and step lengths following a power-law distribution \cite{applebaum2009levy}. As an important kind of non-Gaussian noise, L\'evy noise have been found widely in atmospheric turbulence \cite{sanchez2008nature}, epidemic spreading \cite{dybiec2009modelling} and cell biological behaviour \cite{xu2016switch}. L\'evy noise-driven non-equilibrium systems are known to manifest interesting physical properties. It is worth mentioning that L\'evy noise-driven systems do not satisfy classical fluctuation dissipation relation. Therefore, linear response theory,  which is viewed as a generalization of the fluctuation-dissipation theorem, has been studied for SDEs driven by L\'evy noise \cite{dybiec2012fluctuation,zhang2021linear}. It is similar to the previous part that there are also some small mass limit results for SDEs driven by  L\'evy noise. For example, Talibi \cite{al2010nelson} developed Nelson theory for the $\alpha$-stable L\'evy process. Zhang \cite{zhang2008smoluchowski} obtained Smoluchowski-Kramers approximation for SDEs driven by  L\'evy noise whose moment is finite.

Hamiltonian dynamics \cite{arnol2013mathematical}, as an equivalent description of Newton's second law in the framework of classical mechanics,  form the framework of statistical mechanics. Dissipative Hamiltonian systems with noise have been investigated recently \cite{wei2019hamiltonian, wu2001large}.

In this present paper, we derive the small mass limiting equation of a class of dissipative Hamiltonian systems with L\'evy noise
\begin{equation}
\begin{aligned}
dq_t^{\e}&=\nabla_p H^{\e}(t,x_t^{\e})dt,\\
dp_t^{\e}&=(-\gamma(t,x_t^{\e})\nabla_p H^{\e}(t,x_t^{\e})-\nabla_q H^{\e}(t,x_t^{\e})+F(t,x_t^{\e}))dt+ \sigma(t,x_{t-}^{\e})dL_t,
\end{aligned}
\end{equation}
where $x_t^{\e}=(q_t^{\e},p_t^{\e})$ and $H$ is a Hamiltonian function with small mass parameter $\e$. The functions $\gamma$, $\sigma$ and $F$ are dissipation coefficient, diffusion coefficient and external force dependent on $(q_t^{\e},p_t^{\e})$, respectively. Here the process $L=\{L_t\}_{t\ge0}$ is a L\'evy process. An inspiration for this paper goes back to the work by Birrell-Wehr \cite{birrell2018homogenization}. The main idea of proof is the following: By means of the structure of Hamiltonian systems and a Lyapunov equation, we derive the limiting equation including a noise-induced drift term. Then, we prove that under appropriate assumptions, the original systems converge to the limiting equation in moment. Finally, utilizing non-explosion property of the solution of original systems, we show the convergence in probability for weaker assumptions.

This paper is organized as follows. In Section 2, we recall some basic notations and introduce a class of dissipative Hamiltonian systems with L\'evy noise. In Section 3, we state and prove the homogenization result. More precisely, in Section 3.1, we obtain the moment estimation of kinetic function and get some relevant estimation results. In Section 3.2, we derive the limiting equation by using a Lyapunov equation. In Section 3.3, we finish the proof of the main results (Theorem \ref{result1} and Theorem \ref{result2}). In Section 3.4, we extend the result to some more general systems. In Section 4, we present an illustrative example .

\section{Preliminaries}

\subsection{\textbf{L\'evy motion}}
Let $(\Omega, \mathbb{P})$ be a probability space. An stochastic process $L_t=L(t)$ taking values in $\mathbb{R}^n$ with $L(0)=0$ $a.s.$ (almost surely) is called an $n$-dimensional L\'evy process if it is stochastically continuous, with independent increments and stationary increments.

An $n$-dimensional L\'evy process $L_t$ can be expressed by L\'evy-It\^o decomposition, i.e., there exist a drift vector $b\in\Rn$, a covariance matrix $Q$ such that
\begin{equation*}
L_t=bt+B_Q(t)+\int_{||x||<1}x\widetilde N(t,dx)+\int_{||x||\ge1}x N(t,dx),
\end{equation*}
where  $N(dt,dx)$ is the Poisson random measure on $\mathbb{R}\times(\Rn\backslash\{0\})$, $\widetilde N(dt,dx)\triangleq N(dt,dx)-\nu(dx)dt$ is the compensated Poisson random measure, $\nu\triangleq\mathbb{E}N(1,\cdot)$ is the jump measure, and $B_Q(t)$ is an independent $n$-dimensional Brownian motion with covariance matrix $Q$. The triple $(b,Q,\nu)$ is called the generating triple for the L\'evy process $L_t$. A L\'evy process $L_t$ has $\th$-th moment if and only if $\int_{||x||>1}||x||^\th\nu(dx)<\infty$.

\subsection{\textbf{Dissipative Hamiltonian system with L\'evy noise}}
We consider the dissipative Hamiltonian system described in \cite{birrell2018homogenization}. Given a time-dependent Hamiltonian function $H(t,x_t)$, where $x_t=(q_t,p_t)\in\Rn\times\Rn$. The following Hamiltonian system describe a system with dissipative force and an external force.
\begin{equation}
\begin{aligned}
&\dot{q}_t=\nabla_p H(t,x_t),\\ &\dot{p}_t=-\gamma(t,x_t)\nabla_pH(t,x_t)-\nabla_qH(t,x_t)+F(t,x_t),
\end{aligned}
\end{equation}
with dissipation coefficient $\gamma:[0,\infty)\times\mathbb{R}^{2n}\to \mathbb{R}^{n\times n}$, and external forcing function $F:[0,\infty)\times\mathbb{R}^{2n}\to\Rn$.
A natural example for Hamiltonian function is $H(q,p)=\frac{p^2}{2m}+V(q)$, where $\frac{p^2}{2m}$ represents kinetic energy of system and $m$ represents mass.
Hence we are interested in a family of Hamiltonians depending on some small parameter $\e$ of the form
\begin{equation}
\label{H}
H^{\e}(t,q,p)\triangleq K^{\e}(t,q,p)+V(t,q)=K(\e,t,q,p/\sqrt{\e})+V(t,q).
\end{equation}
We remark that the notation $K$ and $V$ may not represent physical kinetic energy and potential energy. Actually, the splitting is more extensive as long as it satisfies the assumptions we will make below. However, we still call $K$ kinetic energy and $V$ potential energy function in the following sections.

In this paper, we study the following Hamiltonian system perturbed by L\'evy fluctuation
\begin{equation}
\label{slo}
\begin{aligned}
dq_t^{\e}&=\nabla_p H^{\e}(t,x_t^{\e})dt,\\
dp_t^{\e}&=(-\gamma(t,x_t^{\e})\nabla_p H^{\e}(t,x_t^{\e})-\nabla_q H^{\e}(t,x_t^{\e})+F(t,x_t^{\e}))dt+ \sigma(t,x_{t-}^{\e})dL_t,
\end{aligned}
\end{equation}
with initial data $(q_0^{\e}, \ p_0^{\e})$, where $\sigma:[0,\infty)\times\mathbb{R}^{2n}\to\mathbb{R}^{n\times d}$ is noise intensity function and $L=\{L_t\}_{t\ge0}$ is a $\mathbb{R}^d$-valued pure jump L\'evy process with triple $(0,0,\nu)$.
\begin{remark}
We consider only pure jump L\'evy process here, since by L\'evy-It\^o decomposition, L\'evy process could be expressed as a sum of a Brownian motion and a pure jump L\'evy process, in addition to a drift term which may be absorbed in the vector field in SDE. Homogenization of dissipative Hamiltonian systems with Brownian motion was studied in \cite{birrell2018homogenization}. Thereby we use same notations as in \cite{birrell2018homogenization} to make sure the influence of Brownian motion can be added to our results.
\end{remark}

We assume that the pure jump L\'evy process has finite moment. More precisely, we make the following assumption for jump measure $\nu$.\\
{\bf Assumption 1.}
There exists a constant $\th$ such that the L\'evy measure $\nu$ satisfies
\begin{equation*}
\int_{|x|\ge 1} |x|^{2\lor \th}\nu(dx)<\infty, 
\end{equation*}
here ${2\lor \th}= \max \{2,\th\}$.

\section{Homogenization of dissipative Hamiltonian systems under L\'evy fluctuations}

In this section we formulate the assumptions and state the main results Theorem \ref{result1} and Theroem \ref{result2}.
\subsection{\textbf{Moment estimates}}
In this subsection, we derive the moment estimation for kinetic energy $K$ and some relevant estimation results.
For the Hamiltonian function $H$ we make the following assumptions.\\
{\bf Assumption 2. }
The Hamiltonian function $H$ has form \eqref{H},
where $K(\e,t,q,z)$ is non-negative and $\cC^2$ in $(t,q,z)$ for each $\e$. Moreover, there exists a constant $C_0>0$ such that $K^{\e}(0,x_0^{\e})\le C_0$. For every fixed constant $T>0$ and $\e_0>0$, the following conditions hold on $(0,\e_0]\times[0,T]\times\mathbb{R}^{2n}$:\\
1. There exist positive constants $C, M_1$ such that
\begin{equation*}
\max{\{|\partial_tK(\e,t,q,z)|,||\nabla_qK(\e,t,q,z)||,||\nabla_zK(\e,t,q,z)||\}}\le M_1+CK(\e,t,q,z).
\end{equation*}
2. There exist positive constants $c, M_2$ such that
\begin{equation*}
||\nabla_zK(\e,t,q,z)||^2+M_2\ge cK(\e,t,q,z).
\end{equation*}
3. The kinetic energy $K(\e,t,q,z)$ is Lipschitz w.r.t (with respect to) $z$, i.e. there exists a constant $L$ such that
\begin{equation*}
|K(\e,t,q,z_1)-K(\e,t,q,z_2)|\le L|z_1-z_2|.
\end{equation*}
4. The potential energy $V(t,q)$ is $\mathcal{C}^1$ in $(t,q)$ and $\nabla_qV$ is bounded.

For dissipative matrix function $\gamma$, external force $F$ and noise intensity $\sigma$, we assume that\\
{\bf Assumption 3.}
For every $T>0$, the following conditions hold on $[0,T]\times\mathbb{R}^{2n}$:\\
1. The function $\gamma, F, \sigma$ are bounded and Lipschitz.\\
2. The matrix function $\gamma$ is symmetric with eignevalues bounded below by some constant $\lambda>0$.

\begin{remark}
Under the Assumption 1-3 and additional Assumption 4 below, the solution $x_t^{\e}$ to stochastic Hamiltonian system \eqref{slo} exists and is unique. See Appendix for proof.
\end{remark}
At this point, we begin to prove the moment estimations of $K$. We firstly give an upper bound of kinetic energy $K$.
\begin{lemma}
\label{L1}
For every $\th\ge 1$ and $T>0$ there exist positive constants $\alpha_0, \e_0$ such that for all constant $\alpha\in (0,\alpha_0], \epsilon\in(0,\e_0]$ and $t\in[0,T]$, we have
\begin{equation}
\label{K^q}
K^{\e}(t,x_t^\e)^\th \le \frac{\kappa(\e)}{\alpha}+\int_0^t\int_{\mathbb{R}^d\backslash \{0\}}e^{-\alpha (t-s)/\e}[K^{\e}(s,q_{s-}^{\e},p_{s-}^{\e}+\sigma(s,x_{s-}^{\e})x)^\th- K^{\e}(s,q_{s-}^{\e},p_{s-}^{\e})^\th]\widetilde N(ds,dx),
\end{equation}
where $\kappa(\e)=\kappa_1+\kappa_2\e^{1-\th/2}$ for positive constants $\kappa_1$ and $\kappa_2$.
\end{lemma}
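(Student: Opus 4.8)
The plan is to apply the It\^o formula for jump processes to $g(s,q,p):=(K^{\e}(s,q,p))^{\th}$ along the solution $x_s^{\e}$, and then to multiply by the integrating factor $e^{\alpha s/\e}$ so that the dissipativity of $\gamma$ produces the claimed exponential bound. Writing the continuous drift of $p_s^{\e}$ as $-\gamma\nabla_pK^{\e}-\nabla_qK^{\e}-\nabla_qV+F$ and using $\dot q_s^{\e}=\nabla_pH^{\e}=\nabla_pK^{\e}$, the It\^o formula splits $dg$ into a drift $A(s)\,ds$, a small-jump compensator $\int_{|x|<1}[g(\cdot+\sigma x)-g(\cdot)-\nabla_pg\cdot\sigma x]\,\nu(dx)\,ds$, a large-jump compensator $\int_{|x|\ge1}[g(\cdot+\sigma x)-g(\cdot)]\,\nu(dx)\,ds$, and the martingale $\int_{\Rd\setminus\{0\}}[g(\cdot+\sigma x)-g(\cdot)]\,\widetilde N(ds,dx)$; the last is exactly the compensated integral in \eqref{K^q} once the weight $e^{-\alpha(t-s)/\e}$ is attached. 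The goal is to bound $A(s)$ together with the two compensators, pointwise in the state, by $-\tfrac{\alpha}{\e}g+R(\e)$ for a suitable $R(\e)$.

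The first simplification comes from the Hamiltonian structure. Since $\nabla g=\th (K^{\e})^{\th-1}\nabla K^{\e}$, the term $\th (K^{\e})^{\th-1}\nabla_qK^{\e}\cdot\nabla_pK^{\e}$ arising from $\dot q_s^{\e}\cdot\nabla_qg$ cancels exactly against $-\th (K^{\e})^{\th-1}\nabla_pK^{\e}\cdot\nabla_qK^{\e}$ coming from the $-\nabla_qK^{\e}$ part of the $p$-drift, leaving only $\partial_sg$, the friction term $-\th (K^{\e})^{\th-1}\nabla_pK^{\e}\cdot\gamma\nabla_pK^{\e}$, and the force term $\th (K^{\e})^{\th-1}\nabla_pK^{\e}\cdot(F-\nabla_qV)$. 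Using $\nabla_pK^{\e}=\e^{-1/2}\nabla_zK$, part 2 of Assumption 3 (eigenvalues of $\gamma$ at least $\lambda$) and part 2 of Assumption 2 ($\|\nabla_zK\|^2\ge cK-M_2$), the friction term is $\le-\tfrac{\th c\lambda}{\e}(K^{\e})^{\th}+\tfrac{\th\lambda M_2}{\e}(K^{\e})^{\th-1}$. The $\partial_sg$ term (of order $(K^{\e})^{\th}$ and $(K^{\e})^{\th-1}$) and the force term (of order $\e^{-1/2}(K^{\e})^{\th}$ and $\e^{-1/2}(K^{\e})^{\th-1}$) are estimated by parts 1 and 4 of Assumption 2 and part 1 of Assumption 3; since $\e^{-1}$ beats $\e^{-1/2}$, for $\e\le\e_0$ the negative $-\e^{-1}(K^{\e})^{\th}$ part survives, while Young's inequality turns every lower power $(K^{\e})^{\th-1}$ into a small multiple of $\e^{-1}(K^{\e})^{\th}$ plus a constant.

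The main work, and the main obstacle, is the control of the two jump compensators with the correct powers of $\e$. For the large jumps, the Lipschitz bound (part 3 of Assumption 2) gives $|K^{\e}(\cdot,p+\sigma x)-K^{\e}(\cdot,p)|\le \e^{-1/2}L\|\sigma\|_\infty|x|$, so by $|a^{\th}-b^{\th}|\le\th(a\lor b)^{\th-1}|a-b|$ the integrand is bounded by the sum of an $\e^{-1/2}(K^{\e})^{\th-1}|x|$ term and an $\e^{-\th/2}|x|^{\th}$ term; Assumption 1 makes both $\int_{|x|\ge1}|x|\,\nu(dx)$ and $\int_{|x|\ge1}|x|^{\th}\,\nu(dx)$ finite, so the large-jump compensator contributes (after Young) an $\e^{-1}$ constant and, crucially, a constant of order $\e^{-\th/2}$. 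For the small jumps, the subtracted first-order term turns the integrand into a second-order increment which, using the $\cC^2$-regularity of $K$ together with the gradient bound $\|\nabla_zK\|\le L$, is controlled by $\tfrac{1}{\e}|x|^{2}$ times a polynomial in $K^{\e}$; since $\int_{|x|<1}|x|^{2}\,\nu(dx)<\infty$ this is integrable and yields only $\e^{-1}$ constants after Young. The delicate points are precisely keeping the small-jump term integrable (which is why the first-order compensation and the finite second moment of the small jumps are indispensable, and where the uniform control of the curvature of $K$ must be handled with care) and isolating the single $\e^{-\th/2}$ contribution from the large jumps.

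Putting the pieces together gives $A(s)+(\text{compensators})\le-\tfrac{\alpha}{\e}(K^{\e})^{\th}+R(\e)$ with $R(\e)=O(\e^{-1})+O(\e^{-\th/2})$, for some $\alpha\in(0,\alpha_0]$ and all $\e\in(0,\e_0]$. Then $d\big(e^{\alpha s/\e}g\big)=e^{\alpha s/\e}\big(\tfrac{\alpha}{\e}g+A(s)+(\text{compensators})\big)ds+e^{\alpha s/\e}\,d(\text{mart})\le e^{\alpha s/\e}R(\e)\,ds+e^{\alpha s/\e}\,d(\text{mart})$. Integrating on $[0,t]$, dividing by $e^{\alpha t/\e}$, and using $e^{-\alpha t/\e}\int_0^te^{\alpha s/\e}R(\e)\,ds\le\tfrac{\e}{\alpha}R(\e)$ together with $K^{\e}(0,x_0^{\e})^{\th}\le C_0^{\th}$ (Assumption 2) bounds the two non-martingale terms by $\tfrac1{\alpha}\big(\alpha_0 C_0^{\th}+\e R(\e)\big)$. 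Since $\e\cdot O(\e^{-1})=O(1)$ and $\e\cdot O(\e^{-\th/2})=O(\e^{1-\th/2})$, this is of the form $\tfrac1{\alpha}\big(\kappa_1+\kappa_2\e^{1-\th/2}\big)$, and the surviving martingale term is exactly the integral in \eqref{K^q}; choosing $\alpha_0,\e_0$ small enough that the net coefficient is $\alpha\le\th c\lambda/2$ completes the proof.
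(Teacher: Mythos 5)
Your overall strategy coincides with the paper's: It\^o's formula applied to $e^{\alpha s/\e}K^{\e}(s,x_s^\e)^{\th}$, the Hamiltonian cancellation of the $\nabla_qK\cdot\nabla_pK$ terms, the dissipative term $-\tfrac{\lambda c\th}{\e}(K^{\e})^{\th}$ extracted from the friction via Assumptions 2--3, Young's inequality to absorb the lower powers $(K^{\e})^{\th-1}$, the large-jump contribution of size $\e^{-\th/2}$ as the source of $\kappa_2\e^{1-\th/2}$, and the same integrating-factor conclusion. The one place you genuinely depart from the paper is the organization of the jump compensators: the paper keeps the first-order form of It\^o's formula, i.e.\ the compensator $I_1$ of the full increment over \emph{all} of $\Rd\setminus\{0\}$ plus the gradient correction $I_2$ over $\{|x|<1\}$, and bounds both through the Lipschitz property of $K$ and $\|\nabla_zK\|\le M_1+CK$; you instead compensate the small jumps to second order.

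That departure is exactly where your proof has a gap. You claim the second-order increment $(K^{\e})^{\th}(\cdot,p+\sigma x)-(K^{\e})^{\th}(\cdot,p)-\nabla_p\bigl((K^{\e})^{\th}\bigr)\cdot\sigma x$ is bounded by $\tfrac1\e|x|^2$ times a polynomial in $K^{\e}$, invoking ``$\cC^2$-regularity of $K$ together with the gradient bound.'' This does not follow from the hypotheses. A second-order Taylor remainder requires a bound on $\nabla_z^2K$ (at an intermediate point) in terms of constants or powers of $K$, uniformly on $(0,\e_0]\times[0,T]\times\mathbb{R}^{2n}$; Assumption 2 controls only $K$ and its first derivatives ($\|\nabla_zK\|\le L$ from part 3, $\|\nabla_zK\|\le M_1+CK$ from part 1), and mere $\cC^2$ smoothness gives no uniform Hessian bound along the unbounded trajectory $x_s^{\e}$. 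Without curvature control, the second-order increment can only be estimated by the sum of its two first-order pieces, i.e.\ by a multiple of $|x|/\sqrt{\e}$, and then your small-jump compensator needs $\int_{|x|<1}|x|\nu(dx)<\infty$, which Assumption 1 does not give (it constrains only the tail $|x|\ge1$). For comparison, the paper's own estimates \eqref{L1E1}--\eqref{L1E2} also invoke $\int_{|x|<1}|x|\nu(dx)$ and $\int_{\Rd\setminus\{0\}}|x|\nu(dx)$, so it implicitly strengthens Assumption 1 in the same direction --- but it needs nothing beyond that, whereas your route additionally requires Hessian control of $K$ that is available nowhere in the assumptions. The rest of your argument (drift estimates, large-jump treatment, Young's inequality, the final integration) is correct and matches the paper.
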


\begin{proof}
Applying It\^o formula to $e^{\alpha t/\e}K^{\e}(t,x_t^\e)^\th$, we have

\begin{align*}
&e^{\alpha t/\e}K^{\e}(t,x_t^\e)^\th\\
&=K^{\e}(0,x_0^{\e})^\th+\frac{\alpha}{\e}\int_0^t e^{\alpha s/\e}K^{\e}(s,x_s^\e)^\th ds+ \th\int_0^t e^{\alpha s/\e}K^{\e}(s,x_s^\e)^{\th-1}(\partial_s K)^{\e}(s,x_s^{\e})ds\\
&+\frac{\th}{\e}\int_0^t e^{\alpha s/\e}K^{\e}(s,x_s^\e)^{\th-1}(\nabla_z K)^{\e}(s,x_s^{\e})(-\gamma(s,x_s^{\e}))(\nabla_z K)^{\e}(s,x_s^{\e})ds\\
&+\frac{\th}{\sqrt{\e}}\int_0^t e^{\alpha s/\e}(\nabla_z K)^{\e}(s,x_s^{\e})(-\nabla_q V(s,q_s^{\e})+F(s,x_s^{\e}))ds\\
&+\int_0^t\int_{\mathbb{R}^d\backslash \{0\}}e^{\alpha s/\e}[K^{\e}(s,q_{s-}^{\e},p_{s-}^{\e}+\sigma(s,x_{s-}^{\e})x)^\th- K^{\e}(s,q_{s-}^{\e},p_{s-}^{\e})^\th]\widetilde N(ds,dx)\\
&+\int_0^t\int_{\mathbb{R}^d\backslash \{0\}}e^{\alpha s/\e}[K^{\e}(s,q_{s-}^{\e},p_{s-}^{\e}+\sigma(s,x_{s-}^{\e})x)^\th- K^{\e}(s,q_{s-}^{\e},p_{s-}^{\e})^\th]\nu(dx)ds    \tag{$I_1$}\\
&-\int_0^t\int_{|x|<1}e^{\alpha s/\e}\sigma^i(s,x_{s-}^{\e})x\frac{\th}{\sqrt{\e}}K^{\e}(s,q_{s-}^\e,p_{s-}^\e)^{\th-1}(\nabla_{z_i}K)^{\e}(s,q_{s-}^{\e},p_{s-}^{\e}) ]\nu(dx)ds,    \tag{$I_2$}
\end{align*}
where we denote the last two integrals by $I_1, I_2$ respectively. The notation $(\partial_s K)^{\e}(s,x)$ is equal to $\partial_s K(\e,s,q,p/\sqrt{\e})$ and similarly for $(\nabla_z K)^{\e}(s,x)$.

First we estimate terms $I_1,I_2$. Using mean value theorem and Lipschitz condition of $K$ for the term $I_1$ we have
\begin{equation}
\label{L1E1}
\begin{aligned}
I_1&=\int_0^t\int_{\mathbb{R}^d\backslash \{0\}}e^{\alpha s/\e}[K^{\e}(s,q_s^{\e},p_s^{\e}+\sigma(s,x_s^{\e})x)^\th- K^{\e}(s,q_s^{\e},p_s^{\e})^\th]\nu(dx)ds\\
&\le 2^{\th-2}\th \int_0^t\int_{\mathbb{R}^d\backslash \{0\}} e^{\alpha s/\e} \left[K^{\e}(s,q_s^{\e},p_s^{\e})^{\th-1}\left|K^{\e}(s,q_s^{\e},p_s^{\e}+\sigma(s,x_s^{\e})x)-K^{\e}(s,q_s^{\e},p_s^{\e})\right|\right.\\
&\left.+\left|K^{\e}(s,q_s^{\e},p_s^{\e}+\sigma(s,x_s^{\e})x)-K^{\e}(s,q_s^{\e},p_s^{\e})\right|^{\th}\right]\nu(dx)ds\\
&\le \frac{2^{\th-2}\th L||\sigma||_{\infty}}{\sqrt{\e}}\int_{\mathbb{R}^d\backslash \{0\}}|x|\nu(dx)\int_0^t e^{\alpha s/\e} K^{\e}(s,q_s^{\e},p_s^{\e})^{2\th-1}ds+\frac{2^{\th-2}\th L^\th ||\sigma||_{\infty}}{\e^{\th/2}}\int_{\mathbb{R}^d\backslash \{0\}}|x|^\th\nu(dx)\int_0^t e^{\alpha s/\e} ds.
\end{aligned}
\end{equation}

Under Assumption 2-3, for term $I_2$ we have
\begin{equation}
\label{L1E2}
\begin{aligned}
I_2&=-\int_0^t\int_{|x|<1}e^{\alpha s/\e}\sigma^i(s,x_{s}^{\e})x\frac{\th}{\sqrt{\e}}K^{\e}(s,q_{s}^\e,p_{s}^\e)^{\th-1}(\nabla_{z_i}K)^{\e}(s,q_{s}^{\e},p_{s}^{\e}) ]\nu(dx)ds\\
&\le \frac{\th||\sigma||_{\infty}}{\e}\int_{|x|<1}|x|\nu(dx)\left(M_1\int_0^t e^{\alpha s/\e}K^{\e}(s,q_s^\e,p_s^\e)^{\th-1}ds+C\int_0^t e^{\alpha s/\e}K^{\e}(s,q_s^\e,p_s^\e)^{\th}ds\right).
\end{aligned}
\end{equation}

Then combining these two inequalities \eqref{L1E1}, \eqref{L1E2} with Assumption 2-3, we obtain
\begin{equation}\label{Ito K}
\begin{aligned}
&e^{\alpha t/\e}K^{\e}(t,x_t^\e)^\th \\
&\le K^{\e}(0,x_0^{\e})^\th +\left( \frac{\alpha}{\e}+C\th-\frac{\lambda c\th}{\e}+\frac{C\th}{\sqrt{\e}}||-\nabla_q V+F||_{\infty}\right)\int_0^t e^{\alpha s/\e}K^{\e}(s,x_s^\e)^\th ds\\
&+\th \left(M_1+\frac{\lambda M_2}{\e}+\frac{M_1}{\sqrt{\e}}||-\nabla_q V+F||_{\infty}\right)\int_0^t e^{\alpha s/\e} K^{\e}(s,x_s^\e)^{\th-1} ds\\
&+ \left(\frac{2^{\th-2}\th L||\sigma||_{\infty}}{\sqrt{\e}}\int_{\mathbb{R}^d\backslash \{0\}}|x|\nu(dx)+\frac{\th||\sigma||_{\infty}}{\e}\int_{|x|<1}|x|\nu(dx)\right)\int_0^t e^{\alpha s/\e} K^{\e}(s,x_s^\e)^{\th-1} ds\\
&+\frac{C\th||\sigma||_{\infty}}{\e}\int_0^t e^{\alpha s/\e}K^{\e}(s,q_s,p_s)^\th ds+\frac{2^{\th-2}\th L^\th ||\sigma||_{\infty}^\th}{\e^{\th/2}}\int_{\mathbb{R}^d\backslash \{0\}}|x|^\th\nu(dx)\int_0^t e^{\alpha s/\e} ds\\
&+\int_0^t\int_{\mathbb{R}^d\backslash \{0\}}e^{\alpha s/\e}[K^{\e}(s,q_{s-}^{\e},p_{s-}^{\e}+\sigma(s,x_{s-}^{\e})x)^\th- K^\e(s,q_{s-}^{\e},p_{s-}^{\e})^\th]\widetilde N(ds,dx).
\end{aligned}
\end{equation}

Note that Young inequality allows $K^{\th-1}\le\frac{1}{\th}\left(\frac{M}{\delta}\right)^{\th-1}+\frac{\delta}{M}K^{\th}$. Let $M=\max\{M_1,M_2\}$. We get
\begin{equation}
\begin{aligned}
K^{\e}(t,x_t^{\e})^{\th} &\le e^{-\alpha t/\e}K^{\e}(0,x_0^{\e})-\frac{D}{\e} \int_0^t e^{-\alpha (t-s)/\e}K^{\e}(s,x_s^{\e})^\th ds+ \frac{d}{\alpha}\\
&+\int_0^t\int_{\mathbb{R}^d\backslash \{0\}}e^{-\alpha (t-s)/\e}[K^{\e}(s,q_{s-}^{\e},p_{s-}^{\e}+\sigma(s,x_{s-}^{\e})x)^\th- K^{\e}(s,q_{s-}^{\e},p_{s-}^{\e})^\th]\widetilde N(ds,dx),
\end{aligned}
\end{equation}
where
\begin{equation}
\begin{aligned}
D=&\lambda c\th-\alpha-C\th\e-C\th\sqrt{\e}||-\nabla_q V+F||_{\infty}-\th\delta\e-\th\delta\lambda-\th\delta\sqrt{\e}||-\nabla_q V+F||_{\infty}\\
&-2^{\th-2}\th L\delta||\sigma||_{\infty}M^{-1}\sqrt{\e}\int_{\mathbb{R}^d\backslash \{0\}}|x|\nu(dx)-\th\delta||\sigma||_{\infty}\int_{|x|<1}|x|\nu(dx)-C\th\delta||\sigma||_{\infty},
\end{aligned}
\end{equation}
and
\begin{equation}
\begin{aligned}
d&=\left(\frac{M}{\delta}\right)^{\th-1}\left(M\e+\lambda M+ M\sqrt{\e}||-\nabla_q V+F||_{\infty}+2^{\th-2}L\sqrt{\e}||\sigma||_{\infty}\int_{\mathbb{R}^d\backslash \{0\}}|x|\nu(dx)+ ||\sigma||_{\infty}\int_{|x|<1}|x|\nu(dx)\right)\\
&+\left(\frac{M}{\delta}\right)^{\th-1}2^{\th-2}L^\th||\sigma||_{\infty}^\th\e^{1-\th/2}\int_{\mathbb{R}^d\backslash \{0\}}|x|^\th\nu(dx).
\end{aligned}
\end{equation}
For all $\e,\delta,\alpha$ sufficiently small, $D$ is non-negative. In addition, $K^{\e}(0,x_0^{\epsilon})$ is bounded by Assumption 2. Thus we obtain the required inequality \eqref{K^q}.
\end{proof}

Now we give the moment estimation of the kinetic energy $K^{\e}(t,x_t^{\e})$ by means of above assumptions and lemma.
\begin{lemma}
\label{supE}
{\bf (Supremum of expectation of the kinetic energy)}
Under Assumption 1-3, for every positive $T$ and $\th$, the kinetic energy $K$ has the following uniform estimate
\begin{equation}
\label{supEK}
\sup_{t\in[0,T]}\E\left[K^{\e}(t,x_t^\e)^\th\right]=O(\e^{1-\frac{2\lor\th}{2}}), \ \text{as} \ \e \to 0.
\end{equation}
\end{lemma}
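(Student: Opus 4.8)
The plan is to take expectations on both sides of the bound \eqref{K^q} established in Lemma \ref{L1}. Its right-hand side is the deterministic term $\kappa(\e)/\alpha$ plus a stochastic integral against the compensated Poisson measure $\widetilde N$. Once the martingale property of that integral is justified its expectation vanishes, leaving
\[
\E\left[K^{\e}(t,x_t^\e)^\th\right]\le\frac{\kappa(\e)}{\alpha}
\]
uniformly in $t\in[0,T]$. Since $\alpha$ may be fixed at any value in $(0,\alpha_0]$ and $\kappa(\e)=\kappa_1+\kappa_2\e^{1-\th/2}$, taking the supremum over $t$ gives $\ssup\E[K^{\e}(t,x_t^\e)^\th]\le(\kappa_1+\kappa_2\e^{1-\th/2})/\alpha$. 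A case analysis on the exponent then finishes the estimate: when $\th\le 2$ we have $1-\th/2\ge 0$, so $\e^{1-\th/2}$ stays bounded and the right-hand side is $O(1)=O(\e^{1-(2\lor\th)/2})$; when $\th>2$ we have $1-\th/2<0$, the term $\kappa_2\e^{1-\th/2}$ dominates, and the bound becomes $O(\e^{1-\th/2})=O(\e^{1-(2\lor\th)/2})$. In both regimes the claimed rate follows.

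The step requiring genuine care is the vanishing of the compensated-Poisson integral, because a priori its integrand involves $K^\e$ itself, the very quantity we wish to control, so invoking the zero-mean property of $\widetilde N$-integrals directly would be circular. I would resolve this by localization: introduce the stopping times $\tau_R=\inf\{t\ge 0:K^\e(t,x_t^\e)>R\}$, which increase to $+\infty$ by the non-explosion of the solution (cf.\ the Remark and the Appendix). Stopping at $\tau_R$ renders the integrand bounded, so that Assumption 1 (finiteness of $\int_{|x|\ge 1}|x|^{2\lor\th}\nu(dx)$) together with the Lipschitz bound on $K$ in $z$ from Assumption 2 guarantees the truncated integrand lies in $L^1(\P\otimes dt\otimes\nu)$; hence the stopped integral is a true martingale with zero expectation. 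Taking expectations in the stopped version of \eqref{K^q} yields $\E[K^\e(t\wedge\tau_R,x_{t\wedge\tau_R}^\e)^\th]\le\kappa(\e)/\alpha$, and letting $R\to\infty$ through Fatou's lemma (using $K^\e(t\wedge\tau_R,\cdot)^\th\to K^\e(t,\cdot)^\th$ a.s.\ by non-explosion and right-continuity) recovers the bound for $\E[K^\e(t,x_t^\e)^\th]$ without circularity. This localization-plus-Fatou argument is the main obstacle; everything else is bookkeeping on the exponent.

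Finally, I would treat the range $0<\th<1$, which is included in the present statement but excluded from Lemma \ref{L1} (valid only for $\th\ge 1$). Here the map $r\mapsto r^\th$ is concave on $[0,\infty)$, so Jensen's inequality gives $\E[K^\e(t,x_t^\e)^\th]\le(\E[K^\e(t,x_t^\e)])^\th$; applying the already-established $\th=1$ estimate, for which $2\lor\th=2$ and the right-hand side is $O(1)$, shows $\ssup\E[K^\e(t,x_t^\e)^\th]=O(1)=O(\e^{1-(2\lor\th)/2})$, since $2\lor\th=2$ throughout $0<\th\le 2$. Combining the two regimes completes the proof of \eqref{supEK}.
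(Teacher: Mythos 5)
Your proposal is correct and follows essentially the same route as the paper: apply Lemma \ref{L1}, kill the compensated-Poisson integral in expectation (the paper handles your localization concern by citing the standard stopping-time argument in Applebaum, p.~266, which is exactly the truncation-plus-limit procedure you spell out), and read off the rate from $\kappa(\e)=\kappa_1+\kappa_2\e^{1-\th/2}$. Your Jensen step for $0<\th<1$ is the same device as the paper's H\"older step, so there is no substantive difference.
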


\begin{proof}
We first consider $\th\ge1$. Note that
\begin{equation*}
\int_0^t\int_{\mathbb{R}^d\backslash \{0\}}e^{-\alpha (t-s)/\e}[K^{\e}(s,q_{s-}^{\e},p_{s-}^{\e}+\sigma(s,x_s^{\e})x)^\th- K^{\e}(s,q_{s-}^{\e},p_{s-}^{\e})^\th]\widetilde N(ds,dx)
\end{equation*}
is a local martingale and it is in fact a martingale by using appropriate sequence of stopping times (see \cite{applebaum2009levy}, page 266).
Then we obtain the following equality
\begin{equation*}
\E \left[\int_0^t\int_{\mathbb{R}^d\backslash \{0\}}e^{-\alpha (t-s)/\e}[K^{\e}(s,q_{s-}^{\e},p_{s-}^{\e}+\sigma(s,x_s^{\e})x)^\th- K^{\e}(s,q_{s-}^{\e},p_{s-}^{\e})^\th]\widetilde N(ds,dx)\right]=0.
\end{equation*}
It follows that the equality \eqref{supEK} holds from Lemma \ref{L1} and preceding equation for $\th\ge1$.
The results for $0<\th<1$ follows by H\"older's inequality.
\end{proof}

\begin{lemma}
\label{Esup}
{\bf (Expectation of supremum of the kinetic energy)}
Under Assumption 1-3 and for every positive $T$ and $\th$, the kinetic energy $K$ has the following uniform estimate
\begin{equation}\label{Esupeq}
\E\left[\sup_{t\in[0,T]}K^{\e}(t,x_t^{\e})^\th\right]= O(\e^{-\frac{\th}{2}}), \ as\ \e\to 0.
\end{equation}
\end{lemma}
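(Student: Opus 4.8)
The plan is to combine the pathwise bound of Lemma \ref{L1} with a suitable maximal inequality. Set
\[
Y_t=\int_0^t\!\int_{\mathbb{R}^d\backslash\{0\}} e^{-\alpha(t-s)/\e}\,g(s,x)\,\widetilde N(ds,dx),\quad g(s,x):=K^{\e}(s,q_{s-}^{\e},p_{s-}^{\e}+\sigma(s,x_{s-}^{\e})x)^\th-K^{\e}(s,q_{s-}^{\e},p_{s-}^{\e})^\th,
\]
so that Lemma \ref{L1} yields $\ssup K^{\e}(t,x_t^{\e})^\th\le \kappa(\e)/\alpha+\ssup|Y_t|$. Since $\kappa(\e)/\alpha$ is bounded by a constant multiple of $1+\e^{1-\th/2}$, which is of strictly lower order than $\e^{-\th/2}$ as $\e\to0$, it suffices to prove $\E[\ssup|Y_t|]=O(\e^{-\th/2})$. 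Two features make this delicate. First, $Y_t$ is a \emph{stochastic convolution}, not a martingale: the kernel $e^{-\alpha(t-s)/\e}$ depends on the running time $t$, so Doob's/Kunita's inequality is not directly available, and the naive domination $|Y_t|\le e^{-\alpha t/\e}\sup_t|\int_0^t\!\int e^{\alpha s/\e}g\,\widetilde N|$ discards the decay and blows up like $e^{\alpha T/\e}$. Second, Assumption 1 only supplies the $2\lor\th$ moment of $\nu$, while a crude second moment of $g$ would require $\int|x|^{2\th}\nu(dx)$; this forces a split of the jumps into $\{|x|<1\}$ and $\{|x|\ge1\}$, with $Y=Y^{<1}+Y^{\ge1}$.

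The large jumps, a compound Poisson process of finite intensity $\nu(\{|x|\ge1\})<\infty$, I would treat pathwise. Writing $\widetilde N=N-\nu\,ds$ and using $e^{-\alpha(t-s)/\e}\le1$, one gets $\ssup|Y^{\ge1}_t|\le\int_0^T\!\int_{|x|\ge1}|g|\,N(ds,dx)+\int_0^T\!\int_{|x|\ge1}|g|\,\nu(dx)ds$, and each term has expectation $\int_0^T\!\int_{|x|\ge1}\E|g|\,\nu(dx)ds$ by the compensation formula. Inserting the mean value and Lipschitz bound
\[
|g|\le 2^{\th-2}\th\Big(K^{\e}(s,x_s^{\e})^{\th-1}\,L\|\sigma\|_\infty|x|/\sqrt\e+(L\|\sigma\|_\infty|x|)^\th/\e^{\th/2}\Big)
\]
already used in the proof of Lemma \ref{L1}, controlling $\sup_s\E[K^{\e}(s,x_s^{\e})^{\th-1}]$ by Lemma \ref{supE}, and bounding $\int_{|x|\ge1}|x|\nu$ and $\int_{|x|\ge1}|x|^\th\nu$ by Assumption 1, the dominant contribution is the $\e^{-\th/2}\int_{|x|\ge1}|x|^\th\nu(dx)$ term; hence $\E[\ssup|Y^{\ge1}_t|]=O(\e^{-\th/2})$.

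The small jumps require a genuine $L^2$ maximal estimate, which is the technical core. The process $Y^{<1}$ solves the Ornstein--Uhlenbeck type equation $dY^{<1}_t=-\tfrac{\alpha}{\e}Y^{<1}_t\,dt+\int_{|x|<1}g\,\widetilde N(dt,dx)$; applying It\^o's formula to $(Y^{<1}_t)^2$ and discarding the nonpositive drift $-\tfrac{2\alpha}{\e}(Y^{<1}_s)^2$ gives
\[
\ssup (Y^{<1}_t)^2\le\int_0^T\!\int_{|x|<1}g^2\,\nu(dx)ds+\ssup\Big|\int_0^t\!\int_{|x|<1}(2Y^{<1}_{s-}g+g^2)\,\widetilde N(ds,dx)\Big|.
\]
The last term is now a true martingale, so Kunita's inequality bounds its expectation by $C\,\E[(\int_0^T\!\int_{|x|<1}(2Y^{<1}_{s-}g+g^2)^2\nu\,ds)^{1/2}]$; expanding the square, the cross term is $\lesssim\sqrt{\ssup (Y^{<1})^2}\,(\int_0^T\!\int_{|x|<1}g^2\nu\,ds)^{1/2}$, to which Young's inequality applies, moving a term $\tfrac{\delta}{2}\E[\ssup(Y^{<1})^2]$ to the left side. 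After this absorption one is left with $\E[\ssup(Y^{<1})^2]\lesssim\E[\int_0^T\!\int_{|x|<1}g^2\nu]+\E[(\int_0^T\!\int_{|x|<1}g^4\nu)^{1/2}]$. Since $|x|^{2\th}$ and $|x|^{4\th}$ are dominated by $|x|^2$ on $\{|x|<1\}$ when $\th\ge1$, the only jump moment needed is $\int_{|x|<1}|x|^2\nu<\infty$; feeding in the estimates for $\E[K^{\e}(s,x_s^{\e})^{2\th-2}]$ and $\E[K^{\e}(s,x_s^{\e})^{4\th-4}]$ from Lemma \ref{supE} shows both right-hand terms are $O(\e^{-\th})$. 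Therefore $\E[\ssup|Y^{<1}_t|]\le(\E[\ssup(Y^{<1}_t)^2])^{1/2}=O(\e^{-\th/2})$.

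Adding the two pieces proves the claim for $\th\ge1$. For $0<\th<1$ it follows from the case $\th=1$ by Jensen's inequality, since $\ssup K^{\e}(t,x_t^{\e})^\th=(\ssup K^{\e}(t,x_t^{\e}))^\th$ and $\E[(\ssup K^{\e})^\th]\le(\E[\ssup K^{\e}])^\th=O(\e^{-\th/2})$. The main obstacle is exactly the small-jump estimate: the convolution kernel rules out a direct martingale maximal inequality, so one must re-expose a martingale through It\^o's formula on $(Y^{<1})^2$ and close the resulting inequality by absorption, while arranging that all quadratic-variation terms involve only the jump moments guaranteed by Assumption 1.
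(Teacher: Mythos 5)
Your proposal is correct, but it takes a genuinely different---and substantially heavier---route than the paper. The paper applies Lemma \ref{L1} with exponent $1$ rather than $\th$, so the integrand of the stochastic convolution is $K^{\e}(s,q_{s-}^{\e},p_{s-}^{\e}+\sigma x)-K^{\e}(s,q_{s-}^{\e},p_{s-}^{\e})$, which by the Lipschitz condition in Assumption 2 is bounded \emph{deterministically} by $L\|\sigma\|_{\infty}|x|/\sqrt{\e}$; the power $\th$ is taken only afterwards, on the whole pathwise inequality. The $t$-dependence of the kernel is then removed by exactly the It\^o product formula you chose to avoid: the convolution is written as the martingale $M_t=\int_0^t\int g\,\widetilde N(ds,dx)$ plus a term $\frac{\alpha}{\e}\int_0^t e^{-\alpha(t-s)/\e}M_s\,ds$ whose supremum is dominated by $\sup_{t\in[0,T]}|M_t|$ because $\int_0^t\frac{\alpha}{\e}e^{-\alpha(t-s)/\e}ds\le1$. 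Kunita's inequality at exponent $\th\ge2$ then closes the proof using only $\int|x|^2\nu(dx)$ and $\int|x|^{\th}\nu(dx)$, both finite under Assumption 1; no splitting of jumps, no absorption, and no input from Lemma \ref{supE} is needed. Your route---keeping the exponent $\th$ inside Lemma \ref{L1}---puts the random factor $K^{\th-1}$ and the power $|x|^{\th}$ into the jump integrand, and that is precisely what forces your small/large-jump decomposition, the compound-Poisson pathwise bound, and the Ornstein--Uhlenbeck-plus-It\^o-plus-absorption argument. What your approach buys is that you only ever need a first-moment maximal inequality and the fixed-time moments of Lemma \ref{supE}; I checked the exponents and they do close: the small-jump contribution is $O(\e^{-\th})$ at the level of squares, the large-jump contribution is $O(\e^{-\th/2})$, so the claim follows for $\th\ge1$ and then for all $\th>0$ by Jensen, as you say.

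Two technical points in your small-jump step should be tightened. First, the maximal inequality you cite as ``Kunita's inequality'' is stated in Applebaum (Theorem 4.4.23) only for $p\ge2$; at $p=1$ you should either use the Burkholder--Davis--Gundy inequality with the \emph{actual} quadratic variation $[M]_T=\int_0^T\int H^2\,N(ds,dx)$---the absorption argument goes through verbatim, since $[M]_T\le 8\sup_{s\in[0,T]}(Y^{<1}_s)^2\int_0^T\int g^2\,N+2\int_0^T\int g^4\,N$ and $\E[(\int_0^T\int g^{2k}\,N)^{1/2}]\le(\E\int_0^T\int g^{2k}\,\nu\,ds)^{1/2}$ by Jensen---or invoke a Lenglart-type domination theorem to justify replacing $N$ by $\nu$ inside the square root with the expectation outside. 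Second, absorbing $\frac{\delta}{2}\E[\sup_{t\in[0,T]}(Y^{<1}_t)^2]$ into the left-hand side presupposes that this quantity is finite, which is not known a priori; this requires a routine localization by stopping times $T_n=\inf\{t:|Y^{<1}_t|>n\}$, proving the bound uniformly in $n$, and passing to the limit. Neither issue affects the validity of your overall scheme, but both must be addressed for a complete proof.
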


\begin{proof}
By Lemma \ref{L1} we have
\begin{equation}
\label{T1E1}
K^{\e}(t,x_t^{\e}) \le \frac{\kappa}{\alpha}+ \int_0^t\int_{\mathbb{R}^d\backslash \{0\}}e^{-\alpha (t-s)/\e}[K^{\e}(s,q_{s-}^{\e},p_{s-}^{\e}+\sigma(s,x_{s-}^{\e})x)- K^{\e}(s,q_{s-}^{\e},p_{s-}^{\e})]\widetilde N(ds,dx).
\end{equation}
It\^o's product formula implies that
\begin{equation}
\label{T1E2}
\begin{aligned}
&\int_0^t\int_{\mathbb{R}^d\backslash \{0\}}e^{-\alpha (t-s)/\e}[K^{\e}(s,q_{s-}^{\e},p_{s-}^{\e}+\sigma(s,x_{s-}^{\e})x)- K^{\e}(s,q_{s-}^{\e},p_{s-}^{\e})]\widetilde N(ds,dx)\\
=&\int_0^t\int_{\mathbb{R}^d\backslash \{0\}} [K^{\e}(s,q_{s-}^{\e},p_{s-}^{\e}+\sigma(s,x_{s-}^{\e})x)- K^{\e}(s,q_{s-}^{\e},p_{s-}^{\e})]\widetilde N(ds,dx)\\
&+\int_0^t\frac{\alpha}{\e}e^{-\alpha (t-s)/\e} \int_0^s \int_{\mathbb{R}^d\backslash \{0\}} [K^{\e}(r,q_{r-}^{\e},p_{r-}^{\e}+\sigma(r,x_{r-}^{\e})x)- K^{\e}(r,q_{r-}^{\e},p_{r-}^{\e})]\widetilde N(dr,dx) ds.
\end{aligned}
\end{equation}

We first show the proposition in the case when $\th \ge 2$.
Substituting \eqref{T1E2} into \eqref{T1E1} and taking supremum and expectation on both side, we have
\begin{equation}
\label{T1E3}
\begin{aligned}
&\E\left[\sup_{t\in[0,T]}K^{\e}(t,x_t^{\e})^\th\right]\\
\le & 2^{\th-1}\left(\frac{\kappa}{\alpha}\right)^\th+ 4^{\th-1}\E\left[\sup_{t\in[0,T]}\left|\int_0^t\int_{\mathbb{R}^d\backslash \{0\}} [K^{\e}(s,q_{s-}^{\e},p_{s-}^{\e}+\sigma(s,x_{s-}^{\e})x)- K^{\e}(s,q_{s-}^{\e},p_{s-}^{\e})]\widetilde N(ds,dx)\right|^\th\right]\\
+& 4^{\th-1}\E\left[\sup_{t\in[0,T]}\left| \int_0^t\frac{\alpha}{\e}e^{-\alpha (t-s)/\e} \int_0^s \int_{\mathbb{R}^d\backslash \{0\}} [K^{\e}(r,q_{r-}^{\e},p_{r-}^{\e}+\sigma(r,x_{r-}^{\e})x)- K^{\e}(r,q_{r-}^{\e},p_{r-}^{\e})]\widetilde N(dr,dx) ds \right|^\th\right].
\end{aligned}
\end{equation}

For the first Poisson stochastic integral term, Kunita first inequality (\cite{applebaum2009levy}, Theorem 4.4.23) implies that
\begin{equation}
\label{T1E4}
\begin{aligned}
&\E\left[\sup_{t\in[0,T]}\left|\int_0^t\int_{\mathbb{R}^d\backslash\{0\}}K^{\e}(s,q_{s-}^{\e},p_{s-}^{\e}+\sigma(s,x_{s-}^{\e})x)- K^{\e}(s,q_{s-}^{\e},p_{s-}^{\e})\widetilde N(ds,dx)\right|^\th\right]\\
\le &D(\th)\E\left[\left(\int_0^T\int_{\mathbb{R}^d\backslash\{0\}}|K^{\e}(s,q_{s-}^{\e},p_{s-}^{\e}+\sigma(s,x_{s-}^{\e})x)- K^{\e}(s,q_{s-}^{\e},p_{s-}^{\e})|^2\nu(dx)ds\right)^{\frac{\th}{2}}\right]\\
+&\E\left[\int_0^T\int_{\mathbb{R}^d\backslash\{0\}}|K^{\e}(s,q_{s-}^{\e},p_{s-}^{\e}+\sigma(s,x_{s-}^{\e})x)- K^{\e}(s,q_{s-}^{\e},p_{s-}^{\e})|^\th\nu(dx)ds\right]\\
\le &D(\th)\e^{-\frac{\th}{2}}T^{\frac{\th}{2}}L^\th||\sigma||_{\infty}^\th\left(\int_{\mathbb{R}^d\backslash
\{0\}}|x|^2\nu(dx)\right)^{\frac{\th}{2}}+\e^{-\frac{\th}{2}}TL^\th||\sigma||_{\infty}^\th\int_{\mathbb{R}^d\backslash \{0\}}|x|^\th
\nu(dx)\\
=&O(\e^{-\frac{\th}{2}}).
\end{aligned}
\end{equation}
Next we deal with the second Poisson stochastic integral term
\begin{equation}
\label{T1E5}
\begin{aligned}
& \E\left[\sup_{t\in[0,T]}\left| \int_0^t\frac{\alpha}{\e}e^{-\alpha (t-s)/\e} \int_0^s \int_{\mathbb{R}^d\backslash \{0\}} [K^{\e}(r,q_{r-}^{\e},p_{r-}^{\e}+\sigma(r,x_{r-}^{\e})x)- K^{\e}(r,q_{r-}^{\e},p_{r-}^{\e})]\widetilde N(dr,dx) ds \right|^\th\right]\\
\le &\E\left[\sup_{t\in[0,T]}\left|\int_0^t \frac{\alpha}{\e}e^{-\alpha (t-s)/\e} \sup_{s\in[0,t]} \left|\int_0^s \int_{\mathbb{R}^d\backslash \{0\}} [K^{\e}(r,q_{r-}^{\e},p_{r-}^{\e}+\sigma(r,x_{r-}^{\e})x)- K^{\e}(r,q_{r-}^{\e},p_{r-}^{\e})]\widetilde N(dr,dx)\right| ds\right|^\th\right]\\
\le& \E\left[\sup_{t\in[0,T]}\left|\int_0^t\int_{\mathbb{R}^d\backslash\{0\}}K^{\e}(s,q_{s-}^{\e},p_{s-}^{\e}+\sigma(s,x_{s-}^{\e})x)- K^{\e}(s,q_{s-}^{\e},p_{s-}^{\e})\widetilde N(ds,dx)\right|^\th\right]\\
=&O(\e^{-\frac{\th}{2}}),
\end{aligned}
\end{equation}
where the last equality is obtained by utilizing \eqref{T1E4}. Therefore, equality \eqref{Esupeq} holds for $\th\ge2$ by \eqref{T1E3}, \eqref{T1E4}and \eqref{T1E5}. It follows for all $\th>0$ by H\"older's inequality.

\end{proof}

We make an additional assumption for kinetic energy $K$ as follows.\\
{\bf Assumption 4 }
For every $T>0$,  there exist $c>0, \eta>0$ such that
\begin{equation*}
K(\e,t,q,z)\ge c||z||^{\eta}.
\end{equation*}
Now we can deduce an useful proposition under this assumption. Proposition \ref{proposition1} is a direct deduction from Lemma \ref{supE}, Lemma \ref{Esup} and Assumption 4.
\begin{proposition}
\label{proposition1}
Under Assumption 1-4, for every $T>0$ we have
\begin{equation}
\label{supEp}
\sup_{t\in[0,T]}\E\left[||p_t^\e||^\th\right]=\left\{
\begin{aligned}
&O(\e^{\frac{\th}{2}}), \qquad \text{if} \ \th\le2\eta, \\
&O(\e^{\frac{\th}{2}+1-\frac{\th}{2\eta}}), \qquad \text{if} \ \th>2\eta,
\end{aligned}
\right. \
\text{as} \ \e \to 0,
\end{equation}
and
\begin{equation}
\label{Esupp}
\E\left[\sup_{t\in[0,T]}||p_t^{\e}||^\th\right]=O(\e^{\frac{\th}{2}-\frac{\th}{2\eta}}), \ \text{as} \ \e\to 0.
\end{equation}
\end{proposition}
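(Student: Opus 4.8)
The plan is to reduce both statements to the two kinetic-energy moment estimates, Lemma \ref{supE} and Lemma \ref{Esup}, through a single pointwise bound supplied by Assumption 4. The crucial observation is that in the scaled Hamiltonian \eqref{H} the momentum enters the kinetic energy only through the variable $z=p/\sqrt{\e}$, so that $K^\e(t,q,p)=K(\e,t,q,p/\sqrt{\e})$, and Assumption 4 therefore reads $K^\e(t,q,p)\ge c\|p/\sqrt{\e}\|^\eta=c\,\e^{-\eta/2}\|p\|^\eta$. Rearranging and raising to the power $\th/\eta$ produces the key inequality
$$\|p\|^\th \le c^{-\th/\eta}\,\e^{\th/2}\,K^\e(t,q,p)^{\th/\eta},$$
valid pointwise along the trajectory $x_t^\e=(q_t^\e,p_t^\e)$. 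This converts moments of $\|p_t^\e\|^\th$ into moments of $K^\e(t,x_t^\e)$ raised to the exponent $\th/\eta$, which is precisely what the two lemmas control (both being stated for every positive exponent, so the possibly non-integer value $\th/\eta$ is admissible).

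First I would establish \eqref{supEp}. Taking expectations in the key inequality and then the supremum over $t\in[0,T]$ gives
$$\sup_{t\in[0,T]}\E\big[\|p_t^\e\|^\th\big]\le c^{-\th/\eta}\,\e^{\th/2}\,\sup_{t\in[0,T]}\E\big[K^\e(t,x_t^\e)^{\th/\eta}\big].$$
Invoking Lemma \ref{supE} with exponent $\th/\eta$ in place of $\th$, the final supremum is $O(\e^{1-\frac{2\lor(\th/\eta)}{2}})$. The case split in the statement is forced by the term $2\lor(\th/\eta)$: when $\th\le 2\eta$ one has $\th/\eta\le 2$, the exponent $1-\frac{2\lor(\th/\eta)}{2}$ vanishes, and the bound becomes $O(\e^{\th/2})$; when $\th>2\eta$ one has $2\lor(\th/\eta)=\th/\eta$, the exponent equals $1-\frac{\th}{2\eta}$, and multiplying by $\e^{\th/2}$ yields $O(\e^{\frac{\th}{2}+1-\frac{\th}{2\eta}})$, reproducing both branches.

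Next I would establish \eqref{Esupp} by the same route, taking the supremum before the expectation. From the key inequality,
$$\E\Big[\sup_{t\in[0,T]}\|p_t^\e\|^\th\Big]\le c^{-\th/\eta}\,\e^{\th/2}\,\E\Big[\sup_{t\in[0,T]}K^\e(t,x_t^\e)^{\th/\eta}\Big],$$
and Lemma \ref{Esup} with exponent $\th/\eta$ bounds the right-hand expectation by $O(\e^{-\th/(2\eta)})$. Multiplying by the prefactor $\e^{\th/2}$ gives $O(\e^{\frac{\th}{2}-\frac{\th}{2\eta}})$, which is exactly \eqref{Esupp}.

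There is no genuine analytic obstacle, as the authors indicate: the proposition is a direct consequence of the preceding lemmas. The one step I would flag as requiring care is the correct tracking of the scaling factor $\e^{\th/2}$, which hinges entirely on remembering that the physical momentum $p$ appears in $K$ only through $z=p/\sqrt{\e}$; an error in this exponent would misalign every power of $\e$ in the conclusion. The only secondary bookkeeping is to confirm that the two lemmas apply with the exponent $\th/\eta$, which they do since each holds for arbitrary positive order.
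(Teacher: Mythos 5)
Your proposal is correct and follows essentially the same route as the paper: the pointwise bound $\|p\|^\th \le c^{-\th/\eta}\e^{\th/2}K^\e(t,x_t^\e)^{\th/\eta}$ from Assumption 4, followed by Lemma \ref{supE} (with exponent $\th/\eta$, giving the case split at $\th=2\eta$ via the $2\lor(\th/\eta)$ term) for \eqref{supEp} and Lemma \ref{Esup} for \eqref{Esupp}. The paper's proof is exactly this argument stated more tersely, so no further comparison is needed.
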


\begin{proof}
From Assumption 4, we have
\begin{equation*}
\sup_{t\in[0,T]}\E\left[||p_t^\e||^\th\right] \le \e^{\frac{\th}{2}}\sup_{t\in[0,T]}\E\left[K^{\e}(t,x_t^\e)^{\frac{\th}{\eta}}\right].
\end{equation*}
Note that Lemma \ref{supE} implies $\sup_{t\in[0,T]}\E\left[K^\e(t,x_t^\e)^a\right]=O(1)$ for $a\le2$ and $\sup_{t\in[0,T]}\E\left[K^\e(t,x_t^\e)^a\right]=O(\e^{1-\frac{a}{2}})$ for $a>2$. Hence we get \eqref{supEp}. Equation \eqref{Esupp} follows similar arguments and Lemma \ref{Esup}.
\end{proof}

\begin{remark}
If the parameter $\eta$ in Assumption 4 was given, then proposition 1 told us the order of momentum $p_t^\e$ convergence to zero. For example, assume that $\eta$ in Assumption 4 equals to 2, we have $\sup_{t\in[0,T]}\E\left[||p_t^\e||^\th\right]=O(\e^{\frac{\th}{2}})$ when $\th\le 4$ and $\sup_{t\in[0,T]}\E\left[||p_t^\e||^\th\right]=O(\e^{1+\frac{\th}{4}})$ when $\th>4$. Moreover, $\E\left[\sup_{t\in[0,T]}||p_t^\e||^\th\right]=O(\e^{\frac{\th}{4}})$.
\end{remark}

\subsection{\textbf{Derivation of the limit equation}}
In this subsection, we derive the limit equation of the system \eqref{slo} as $\e\to 0$.
To this end we make an additional assumption on $\gamma$.\\
{\bf Assumption 5 }
Every element $\gamma_i^j$ in matrix function $\gamma$ is $C^1$ and independent of $p$.

Note that stochastic Hamiltonian equation \eqref{slo} can be simplified to
\begin{equation}
\label{simplified1}
\begin{aligned}
d(q_t^{\e})&=\nabla_pH^{\e}(t,x_t^{\e})dt\\
&=\gamma^{-1}(t,x_t^{\e})(\nabla_qH^{\e}(t,x_t^{\e})-F(t,x_t^{\e}))dt +\gamma^{-1}(t,x_t^{\e})\sigma(t,x_{t-}^{\e})dL_t-\gamma^{-1}(t,x_t^{\e})d(p_t^{\e}).
\end{aligned}
\end{equation}
Since matrix function $\gamma$ has bounded eigenvalues, $\gamma$ is invertible.
Taking stochastic integration by parts formula for the last term $\gamma^{-1}(t,x_t^{\e})d(p_t^{\e})$ on the right hand side of \eqref{simplified1}, we have
\begin{equation*}
\begin{aligned}
(\gamma^{-1})_i^j(t,q_t^{\e})d(p_t^{\e})_j=&-d((\gamma^{-1})_i^j(t,q_t^{\e})(p_t^{\e})_j) +(p_{t-}^{\e})_j\partial_t(\gamma^{-1})_i^j(t,q_t^{\e})dt\\
&+(p_{t-}^{\e})_j\partial_{q^l}(\gamma^{-1})_i^j(t,q_t^{\e})\partial_{p_l}H^{\e}(t,x_t^{\e})dt,
\end{aligned}
\end{equation*}
where $\partial_{q^l}(\gamma^{-1})_i^j$ means the $l$-th component of $\nabla_q(\gamma^{-1})_i^j$, and $\partial_{p_l}H$ means the $l$-th component of $\nabla_qH$. Here we used Einstein summation notation.
Therefore,
\begin{equation}
\label{D1}
\begin{aligned}
d(q_t^{\e})_i=&(\gamma^{-1})_i^j(t,q_t^{\e})(\partial_{q_j}H^{\e}(t,x_t^{\e})-F_j(t,x_t^{\e}))dt +(\gamma^{-1})_i^j(t,q_t^{\e})\sigma_j^{\rho}(t,x_{t-}^{\e})d(L_t)_{\rho}\\
&-d((\gamma^{-1})_i^j(t,q_t^{\e})(p_t^{\e})_j) +(p_{t-}^{\e})_j\partial_t(\gamma^{-1})_i^j(t,q_t^{\e})dt +(p_{t-}^{\e})_j\partial_{q^l}(\gamma^{-1})_i^j(t,q_t^{\e})\partial_{p_l}H^{\e}(t,x_t^{\e})dt.
\end{aligned}
\end{equation}

To simplify the last term $(p_t^{\e})_j\partial_{p_l}H^{\e}(t,x_t^{\e})dt$, we compute
\begin{equation}
\begin{aligned}
&d((p_t^{\e})_i(p_t^{\e})_j)=(p_{t-}^{\e})_id(p_t^{\e})_j+(p_{t-}^{\e})_jd(p_t^{\e})_i+d[p_i^{\e},p_j^{\e}]_t\\
=&(p_{t-}^{\e})_i\left[(-\gamma_j^k(t,p_t^{\e})\partial_{p_k} H^{\e}(t,x_t^{\e})-\partial_{q_j} H^{\e}(t,x_t^{\e})+F_j(t,x_t^{\e}))dt+ \sigma_j^{\rho}(t,x_{t-}^{\e})d(L_t)_{\rho}\right]\\
+&(p_{t-}^{\e})_j\left[(-\gamma_i^k(t,p_t^{\e})\partial_{p_k} H^{\e}(t,x_t^{\e})-\partial_{q_i} H^{\e}(t,x_t^{\e})+F_i(t,x_t^{\e}))dt+ \sigma_i^{\rho}(t,x_{t-}^{\e})d(L_t)_{\rho}\right]\\
+&\int_{\mathbb{R}^d \backslash\{0\}}\sigma_i^k(t,x_{t-}^{\e})\sigma_j^l(t,x_{t-}^{\e})x_kx_lN(dt,dx).
\end{aligned}
\end{equation}
Rewrite this equation in the form of the following Lyapunov equation \cite{ortega2013matrix}
\begin{equation}
\label{Lyapunoveq}
\gamma_j^k(V_t)_{ki}+\gamma_i^k(V_t)_{kj}=(C_t)_{ij},
\end{equation}
where $(V_t)_{ij}=\partial_{p_i} H^{\varepsilon}(t,x_t^{\varepsilon})(p_{t-}^{\varepsilon})_jdt$, and
\begin{equation*}
\begin{aligned}
(C_t)_{ij}=&-d((p_t^{\e})_i(p_t^{\e})_j)+(p_{t-}^{\e})_i\left[-\partial_{q_j} H^{\e}(t,x_t^{\e})+F_j(t,x_t^{\e})\right]dt+(p_{t-}^{\e})_j\left[-\partial_{q_i} H^{\e}(t,x_t^{\e})+F_i(t,x_t^{\e})\right]dt\\
+&(p_{t-}^{\e})_i\sigma_j^{\rho}(t,x_{t-}^{\e})d(L_t)_{\rho}+(p_{t-}^{\e})_j\sigma_i^{\rho}(t,x_{t-}^{\e})d(L_t)_{\rho}+ \int_{\mathbb{R}^d\backslash\{0\}}\sigma_i^k(t,x_{t-}^{\e})\sigma_j^l(t,x_{t-}^{\e})x_kx_l N(dt,dx).
\end{aligned}
\end{equation*}
By solving Lyapunov equation \eqref{Lyapunoveq}, we have
\begin{equation*}
(V_t)_{ij}=\int_0^{\infty}e^{-y\gamma_i^k}(C_t)_{kl}e^{-y\gamma_j^l}dy.
\end{equation*}
Hence, we have
\begin{equation}
\label{D2}
\begin{aligned}
&(p_{t-}^{\e})_j\partial_{p_l}H^{\e}(t,x_t^{\e})dt=G_{jl}^{ab}(t,q_t^{\e})(C_t)_{ab}\\
&=G_{jl}^{ab}(t,q_t^{\e})\left[-d((p_t^{\e})_a(p_t^{\e})_b) +(p_{t-}^{\e})_a(-\partial_{p_b}H^{\e}(t,x_t^{\e})+F_b(t,x_t^{\e}))dt\right.\\
&\left. +(p_{t-}^{\e})_b(-\partial_{p_a}H^{\e}(t,x_t^{\e})+F_a(t,x_t^{\e}))dt+(p_{t-}^{\e})_a\sigma_b^{\rho}(t,x_{t-}^{\e})d(L_t)_{\rho} +(p_{t-}^{\e})_b\sigma_a^{\rho}(t,x_{t-}^{\e})d(L_t)_{\rho}\right.\\
&+\int_{\mathbb{R}^d\backslash\{0\}}\sigma_a^k(t,x_{t-}^{\e})\sigma_b^l(t,x_{t-}^{\e})x_kx_l N(dt,dx)],
\end{aligned}
\end{equation}
where $G_{jl}^{ab}(t,q_t^{\e})=\int_0^{\infty}e^{-y\gamma_j^a(t,q_t^{\e})}e^{-y\gamma_l^b(t,q_t^{\e})}dy$.

Combining Eq.\eqref{D1} and Eq.\eqref{D2} together, we see that $q_t^{\e}$ satisfies the equation
\begin{equation}
\label{original}
\begin{aligned}
d(q_t^{\e})_i&=(\gamma^{-1})_i^j(t,q_t^{\e})\left(\partial_{q_j}V(t,q_t^{\e})+F_j(t,x_t^{\e})\right)dt +(\gamma^{-1})_i^j(t,q_t^{\e})\sigma_j^{\rho}(t,x_{t-}^{\e})d(L_t)_{\rho}\\
&+(\gamma^{-1})_i^j(t,q_t^{\e})\partial_{q_j}K^{\e}(t,x_t^{\e})dt-\partial_{q^h}(\gamma^{-1})_i^j(t,q_t^{\e}) G_{jh}^{ab}(t,q_t^{\e})\int_{\mathbb{R}^d\backslash\{0\}}\sigma_a^k(t,x_{t-}^{\e})\sigma_b^l(t,x_{t-}^{\e})x_kx_l N(dt,dx)\\
&+d(R_t^{\e})_i,
\end{aligned}
\end{equation}
where
\begin{equation}
\label{error}
\begin{aligned}
d(R_t^{\e})_i&=d((\gamma^{-1})_i^j(t,q_t^{\e})(p_t^{\e})_j)-(p_t^{\e})_j\partial_t(\gamma^{-1})_i^j(t,q_t^{\e})dt\\ &-\partial_{q^h}(\gamma^{-1})_i^j(t,q_t^{\e})G_{jh}^{ab}(t,q_t^{\e})\left[-d((p_t^{\e})_a(p_t^{\e})_b) +(p_{t-}^{\e})_a(-\partial_{p_b}H^{\e}(t,x_t^{\e})+F_b(t,x_t^{\e}))dt\right.\\
&+\left.(p_{t-}^{\e})_b(-\partial_{p_a}H^{\e}(t,x_t^{\e})+F_a(t,x_t^{\e}))dt+(p_{t-}^{\e})_a\sigma_b^{\rho}(t,x_{t-}^{\e})d(L_t)_{\rho} +(p_{t-}^{\e})_b\sigma_a^{\rho}(t,x_{t-}^{\e})d(L_t)_{\rho}\right].
\end{aligned}
\end{equation}
Note that term $(\gamma^{-1})_i^j(t,q_t^{\e})\partial_{q_j}K^{\e}(t,x_t^{\e})dt$ in \eqref{original} will survive in the limiting equation. Here we make another assumption. \\
{\bf Assumption 6 }
Every element $\partial_{q_j}K$ in $\nabla_qK$ is Lipschitz w.r.t $q$.
\begin{remark}
This assumption seems a little strong. However, it is reasonable since we assume function $K$ is $\mathcal{C}^2$, hence $K$ is locally Lipschitz. Indeed we will extend our results to locally Lipshitz $K$ in Section 3.4. If $K$ is independent of $q$, then this term can be ignored. If $K$ does not have additional assumption, we refer to \cite{birrell2019homogenization} for estimations of this term.
\end{remark}
The proceeding calculations motivate the proposed lower dimensional limiting equation for the dynamics of position $q$:
\begin{equation}
\label{limiting}
\begin{aligned}
d(q_t)_i&=(\gamma^{-1})_i^j(t,q_t)\left(\partial_{q_j}V(t,q_t)+F_j(t,x_t)\right)dt +(\gamma^{-1})_i^j(t,q_t)\sigma_j^{\rho}(t,x_{t-})d(L_t)_{\rho}\\
&+(\gamma^{-1})_i^j(t,q_t)\partial_{q_j}K(t,x_t)dt -\partial_{q^h}(\gamma^{-1})_i^j(t,q_t) G_{jh}^{ab}(t,q_t)\int_{\mathbb{R}^d\backslash\{0\}}\sigma_a^k(t,x_{t-})\sigma_b^l(t,x_{t-})x_kx_l N(dt,dx),
\end{aligned}
\end{equation}
where $x_t=(q_t,0)$ since momentum $p_t^\e$ converges to $0$ from Proposition \ref{proposition1}. Here we denote
\begin{equation}
S_i(t,x)=\int_0^t \partial_{q^h}(\gamma^{-1})_i^j(t,q)G_{jh}^{ab}(s,q)\int_{\mathbb{R}^d\backslash\{0\}}\sigma_a^k(s,x_{s-})\sigma_b^l(s,x_{s-})z_kz_lN(ds,dz).
\end{equation}
Actually it is the noise-induced drift in limiting equation.

\subsection{\textbf{Proof of convergence to the limiting equation}}
In this subsection, we show that the stochastic Hamiltonian system \eqref{slo} converge to homogenized equation \eqref{limiting} in moment under an additional assumption:\\
{\bf Assumption 7.}
Assume that function $\gamma$ is $\mathcal{C}^2$ and $\partial_t \gamma$, $\partial_{q^i} \gamma$, $\partial_t\partial{q^i} \gamma$ and $\partial_{q^i}\partial_{q^j} \gamma$ are bounded on $[0,T]\times\mathbb{R}^n$, for every $T$.

Now we demonstrate that the remainder term $R_t^{\e}$ converges to zero. For convenience, we denote $\tilde{C}$ a finite positive constant whose value may vary from line to line and the notation $\tilde{C}(\cdot)$ to emphasize the dependence on the quantities appearing in the parentheses.
\begin{lemma}
\label{R}
Under Assumption 1-7, for every $T>0, \eta>1$ and $\th<\eta$, we have
\begin{equation}
\E\left[\sup_{t\in[0,T]}||R_t^{\e}||^\th\right]=O(\e^\beta),\ as \ \e\to 0,
\end{equation}
where $R_t^{\e}$ was defined in Eq. \eqref{error} and $\beta(\th)$ is a piecewise function
\[\beta(\th)=
\begin{cases}
\frac{\th}{2}\left(1-\frac{1}{\eta}\right),  & 0<\th\le\frac{2\eta}{\eta+1},\\
1-\frac{\th}{\eta},  & \th>\frac{2\eta}{\eta+1}.
\end{cases}\]

\end{lemma}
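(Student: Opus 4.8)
The plan is to integrate the differential \eqref{error} over $[0,t]$, split $R_t^{\e}$ into a finite sum of elementary terms, and estimate each after taking $\sup_{t\in[0,T]}$, raising to the power $\th$ and taking expectations. Writing $\phi(s)=\partial_{q^h}(\gamma^{-1})_i^j(s,q_s^{\e})G_{jh}^{ab}(s,q_s^{\e})$, I first record that under Assumptions 3, 5 and 7 the quantities $\gamma^{-1}$, $\partial_t(\gamma^{-1})$, $\nabla_q(\gamma^{-1})$, and hence $\phi$ and $\nabla_q\phi$, are bounded (the eigenvalue bound $\gamma\ge\lambda I$ makes $G_{jh}^{ab}=\int_0^\infty e^{-y\gamma_j^a}e^{-y\gamma_h^b}\,dy$ and its $q$-derivatives bounded through the exponential decay). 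Using the elementary inequality $(\sum_{k=1}^N a_k)^\th\le N^{(\th-1)\vee0}\sum_k a_k^\th$, it then suffices to bound three families: (i) the linear-in-$p$ terms, i.e.\ the boundary term $(\gamma^{-1})_i^j(t,q_t^{\e})(p_t^{\e})_j$ from the exact differential, the drift $\int_0^t(p_s^{\e})_j\partial_s(\gamma^{-1})_i^j\,ds$, and the Poisson integrals $\int_0^t\phi(s)(p_{s-}^{\e})_a\sigma_b^\rho\,d(L_s)_\rho$; (ii) the quadratic-in-$p$ term $\int_0^t\phi(s)\,d((p_s^{\e})_a(p_s^{\e})_b)$; and (iii) the drift terms $\int_0^t\phi(s)(p_{s-}^{\e})_a(-\partial_{q_b}H^{\e}+F_b)\,ds$ and their symmetric partners.

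For family (i) the boundary term is handled directly by Proposition \ref{proposition1}, since $\E[\sup_{t\in[0,T]}\|p_t^{\e}\|^\th]=O(\e^{\frac{\th}{2}(1-1/\eta)})$, and the $\partial_s(\gamma^{-1})$ drift is dominated by $\int_0^T\|p_s^{\e}\|\,ds$ and obeys the same bound. For the Poisson integrals I invoke Kunita's first inequality exactly as in the proof of Lemma \ref{Esup}, reducing to $\nu$-integrals of $\|p_{s-}^{\e}\|^2|\sigma|^2|x|^2$ and $\|p_{s-}^{\e}\|^\th|\sigma|^\th|x|^\th$; the finiteness $\int_{|x|\ge1}|x|^{2\vee\th}\nu(dx)<\infty$ of Assumption 1, boundedness of $\sigma$, and Proposition \ref{proposition1} again yield $O(\e^{\frac{\th}{2}(1-1/\eta)})$. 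Thus family (i) contributes the exponent $\frac{\th}{2}(1-1/\eta)$, which is the bottleneck for $0<\th\le\frac{2\eta}{\eta+1}$. The drift terms (iii) are the mildest: by Assumption 2 and the boundedness of $F$ and $\nabla_qV$ the integrand is $\lesssim\|p_s^{\e}\|(1+K^{\e})$, and Assumption 4 in the form $\|p_s^{\e}\|\le\sqrt{\e}\,(K^{\e}/c)^{1/\eta}$ turns this into $\sqrt{\e}\,(1+(K^{\e})^{1+1/\eta})$; Hölder in time with Lemma \ref{supE} shows it is of strictly higher order and never binds.

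The crux is family (ii). Because $\phi$ depends on $q_s^{\e}$, the term $\int_0^t\phi\,d((p^{\e})_a(p^{\e})_b)$ is not a pure boundary term: integration by parts (with $\phi$ continuous of finite variation, so no extra bracket appears) produces $\phi(t)(p_t^{\e})_a(p_t^{\e})_b$, which is quadratic in $p$ and hence $O(\e^{\th(1-1/\eta)})$, together with $-\int_0^t(p_s^{\e})_a(p_s^{\e})_b\,d\phi(s)$. Expanding $d\phi=\partial_s\phi\,ds+\nabla_q\phi\cdot dq_s^{\e}$ and recalling $dq_s^{\e}=\nabla_pH^{\e}\,ds=\e^{-1/2}(\nabla_zK)^{\e}\,ds$, the dangerous piece is $\int_0^t(p_s^{\e})_a(p_s^{\e})_b\,\nabla_q\phi\cdot\e^{-1/2}(\nabla_zK)^{\e}\,ds$, whose integrand is $\lesssim\e^{-1/2}\|p_s^{\e}\|^2(1+K^{\e})=\sqrt{\e}\,\|z_s\|^2(1+K^{\e})\lesssim\sqrt{\e}\,(1+(K^{\e})^{1+2/\eta})$ after Assumption 4. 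Applying Hölder in time and Lemma \ref{supE} with $a=\th(1+2/\eta)$ gives $\e^{\th/2}\,O(\e^{1-a/2})=O(\e^{1-\th/\eta})$ once $a>2$, i.e.\ precisely the second branch of $\beta$; the two exponents $\frac{\th}{2}(1-1/\eta)$ and $1-\th/\eta$ coincide at $\th=\frac{2\eta}{\eta+1}$, which fixes the switch point in the statement.

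The main obstacle is exactly this family-(ii) term: one must exploit the cancellation of the $\e^{-1/2}$ singularity of the fast drift $\nabla_pH^{\e}$ against the $\e$-smallness of $\|p^{\e}\|^2$ supplied by Proposition \ref{proposition1} and Assumption 4, and then optimize the resulting $K$-moment through Lemma \ref{supE}; obtaining the sharp exponent $1-\th/\eta$ rather than a cruder bound is what forces the two-branch form of $\beta$ and the constraint $\th<\eta$ (so that $1-\th/\eta>0$), while $\eta>1$ guarantees positivity of the first branch. Finally, the estimates are carried out first for $\th\ge2$, where Kunita's inequality and the time-Hölder bounds apply directly; the range $0<\th<2$ is recovered by Hölder's inequality as in Lemmas \ref{supE} and \ref{Esup}, taking care to reproduce the piecewise exponent $\beta(\th)$.
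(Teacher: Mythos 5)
Your proposal is correct and follows essentially the same route as the paper: the same decomposition of $R_t^{\e}$ (the paper splits it into eight terms $J_1,\dots,J_8$ rather than your three families), the same integration by parts against the $\mathcal{C}^1$ finite-variation factor $\partial_q(\gamma^{-1})G$ to handle the quadratic-in-$p$ term, Kunita's first inequality for the Poisson integrals, and the same conversion of $\|p^{\e}\|$ into $K^{\e}$-moments via Assumption 4 followed by Lemma \ref{supE} and Proposition \ref{proposition1}, with the identical exponent comparison producing the two branches of $\beta$. You also correctly identify the $\e^{-1/2}\nabla_pH^{\e}$ piece of the quadratic term as the source of the $1-\th/\eta$ branch, which is exactly where the paper's bound \eqref{J4} comes from.
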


\begin{proof}
Integrating Eq. \eqref{error} on $[0,T]$, then taking expectation and supremum on it, we have
\begin{equation*}
\begin{aligned}
&\E\left[\sup_{t\in[0,T]}||R_t^{\e}||^\th\right]\le 8^{\th-1}\left( \E\left[\sup_{t\in[0,T]} ||(\gamma^{-1})_i^j(t,q_t^{\e})(p_t^{\e})_j||^\th\right]+\E\left[\sup_{t\in[0,T]} ||(\gamma^{-1})_i^j(0,q_0^{\e})(p_0^{\e})_j||^\th\right]\right.\\
&+\E\left[\sup_{t\in[0,T]}\left|\left|\int_0^t(p_s^{\e})_j\partial_s(\gamma^{-1})_i^j(s,q_s^{\e})ds\right|\right|^\th\right] +\E\left[\sup_{t\in[0,T]}\left|\left|\int_0^t\partial_{q^h}(\gamma^{-1})_i^j(s,q_s^{\e})G_{jh}^{ab}(s,q_s^{\e})d((p_s^{\e})_a(p_s^{\e})_b)\right|\right|^\th\right]\\
&+\E\left[\sup_{t\in[0,T]}\left|\left|\int_0^t \partial_{q^h}(\gamma^{-1})_i^j(s,q_s^{\e})G_{jh}^{ab}(s,q_s^{\e})(p_{s}^{\e})_a\left(\partial_{q_b}K^{\e}(s,x_s^{\e})+\partial_{q_b}V(s,q_s^{\e})+F_b(s,x_s^{\e})ds\right)\right|\right|^\th\right]\\
&+\E\left[\sup_{t\in[0,T]}\left|\left|\int_0^t \partial_{q^h}(\gamma^{-1})_i^j(s,q_s^{\e})G_{jh}^{ab}(s,q_s^{\e})(p_{s}^{\e})_b\left(\partial_{q_a}K^{\e}(s,x_s^{\e})+\partial_{q_a}V(s,q_s^{\e})+F_a(s,x_s^{\e})ds\right)\right|\right|^\th\right]\\
&+\E\left[\sup_{t\in[0,T]}\left|\left|\int_0^t \partial_{q^h}(\gamma^{-1})_i^j(t,q_t^{\e})G_{jh}^{ab}(t,q_t^{\e})(p_{t-}^{\e})_a\sigma_b^{\rho}(t,x_t^{\e})d(L_t)_{\rho}\right|\right|^\th\right]\\
&+\E\left[\sup_{t\in[0,T]}\left|\left|\int_0^t \partial_{q^h}(\gamma^{-1})_i^j(t,q_t^{\e})G_{jh}^{ab}(t,q_t^{\e})(p_{t-}^{\e})_b\sigma_a^{\rho}(t,x_t^{\e})d(L_t)_{\rho}\right|\right|^\th\right]\\
&:=\sum_{i=1}^8 J_i.
\end{aligned}
\end{equation*}
We will now give upper bounds of terms $\{J_i\}_{i=1}^8$ for $\th\ge1$.
For the first two terms,
\begin{equation*}
J_1+J_2\le 2||\gamma^{-1}||_{\infty}^\th\E\left[\sup_{t\in[0,T]}||p_t^{\e}||^\th\right].
\end{equation*}
For the third term, we have
\begin{equation*}
J_3\le T^{\th-1}||\partial_t{\gamma^{-1}}||_{\infty}^\th \E\left[\int_0^T||p_s^{\e}||^\th ds\right]
\le T^\th||\partial_t{\gamma^{-1}}||_{\infty}^\th \sup_{t\in[0,T]}\E\left[||p_t^{\e}||^\th\right].
\end{equation*}
Note by Assumption 7 we can deduce that the function $\partial_q(\gamma^{-1})(t,q)G(t,q)$ is bounded and $\mathcal{C}^1$. Hence we have the following estimation (see Appendix)
\begin{equation}
\label{J4}
J_4\le \tilde{C}(\th,T,M_1,C,\gamma)\left(\E[\sup_{t\in[0,T]}||p_t^{\e}||^{2\th}]+\e^{-\frac{\th}{2}}\sup_{t\in[0,T]}\E \left[||p_t^\e||^{2\th}\right] +\e^{-\frac{\th}{2}}\sup_{t\in[0,T]}\E\left[||p_t^\e||^{2\th}K^\e(t,x_t^\e)^\th\right]\right).
\end{equation}

Applying H\"older inequality and Assumption 2-3 we have
\begin{equation*}
\begin{aligned}
J_5&\le T^{\th-1}
\E\left[\sup_{t\in[0,T]}\int_0^t||p_s^{\e}||^\th\left(\left|\left|\nabla_qK^{\e}(s,x_s^{\e})\right|\right|^\th+||\nabla_qV+F||_{\infty}^\th\right)ds\right]\\
&\le T^\th\left(\sup_{t\in[0,T]}\E\left[||p_t^{\e}||^\th||K^{\e}(t,x_t^{\e})||^\th\right]+(M_1^\th+||\nabla_qV+F||_{\infty}^\th)\sup_{t\in[0,T]}\E\left[||p_t^{\e}||^\th\right]\right).
\end{aligned}
\end{equation*}
The estimation of $J_6$ is similar to $J_5$.
For the last two term (see Appendix), we have
\begin{equation}
\label{J7}
J_7\le \tilde{C}(\th,T,\nu)\sup_{t\in[0,T]}\E\left[||p_t^{\e}||^\th\right].
\end{equation}
The estimation of $J_8$ is similar to $J_7$ as well. Substitute all these upper bound together, we obtain
\begin{equation*}
\begin{aligned}
\E\left[\sup_{t\in[0,T]}||R_t^{\e}||^\th\right]&\le \tilde{C}\left(\E\left[\sup_{t\in[0,T]}||p_t^\e||^\th\right] +\E\left[\sup_{t\in[0,T]}||p_t^\e||^{2\th}\right] +\sup_{t\in[0,T]}\E\left[||p_t^\e||^\th\right] +\e^{-\frac{\th}{2}}\sup_{t\in[0,T]}\E\left[||p_t^\e||^{2\th}\right]\right.\\
&\left.+\sup_{t\in[0,T]}\E\left[||p_t^\e||^\th K^\e(t,x_t^\e)^\th\right] +\e^{-\frac{\th}{2}}\sup_{t\in[0,T]}\E\left[||p_t^\e||^{2\th} K^\e(t,x_t^\e)^\th\right]\right)\\
&\le \tilde{C}\left(\E\left[\sup_{t\in[0,T]}||p_t^\e||^\th\right] +\E\left[\sup_{t\in[0,T]}||p_t^\e||^{2\th}\right] +\sup_{t\in[0,T]}\E\left[||p_t^\e||^\th\right] \right)\\
&+\tilde{C}\e^{\frac{\th}{2}}\left(\sup_{t\in[0,T]}\E\left[K^\e(t,x_t^\e)^{\th+\frac{\th}{\eta}}\right] +\sup_{t\in[0,T]}\E\left[K^\e(t,x_t^\e)^{\frac{2\th}{\eta}}\right] +\sup_{t\in[0,T]}\E\left[K^\e(t,x_t^\e)^{\th+\frac{2\th}{\eta}}\right]\right).
\end{aligned}
\end{equation*}
The last inequality follows from the similar arguments in proposition \ref{proposition1}. Now we only need to compare order of $\e$ in these terms. By means of Lemma \ref{supE} and Proposition \ref{proposition1}, we obtain
\begin{equation}
\E\left[\sup_{t\in[0,T]}||R_t^{\e}||^\th\right]=O(\e^{\frac{\th}{2}\left(1-\frac{1}{\eta}\right)})+O(\e^{1-\frac{\th}{\eta}}).
\end{equation}
Thus if $\th>2-\frac{2}{\eta+1}$, then $\E\left[\sup_{t\in[0,T]}||R_t^{\e}||^\th\right]=O(\e^{1-\frac{\th}{\eta}})$. If $1\le\th\le2-\frac{2}{\eta+1}$ then $\E\left[\sup_{t\in[0,T]}||R_t^{\e}||^\th\right]=O(\e^{\frac{\th}{2}\left(1-\frac{1}{\eta}\right)})$. As for the case $\th<1$, H\"older inequality implies that $\E\left[\sup_{t\in[0,T]}||R_t^{\e}||^\th\right]=O(\e^{\frac{\th}{2}\left(1-\frac{1}{\eta}\right)})$.
\end{proof}

Thus we can show that the stochastic Hamiltonian system \eqref{slo} uniformly converges to the homogenized equation \eqref{limiting} in moment as follows.
\begin{theorem}
\label{result1}
{\bf (Convergence to the limiting equation in moment)}
Suppose Assumption 1-7 holds. Let $x_t^{\e}$ be the solution of SDE \eqref{slo} with initial condition $(p_0^{\e},q_0^{\e})$ and $q_t$ be the solution of SDE \eqref{limiting}with initial condition $q_0$. Also suppose that for every $\e>0, \eta>1$, the initial condition satisfies integrable conditions $\E[||q_0^{\e}||^\th]<\infty, \E[||q_0||^\th]<\infty$ and $\E[||q_0^{\e}-q_0||^\th]=O(\e^{\beta})$. Then for every $T>0$ and $\th<\eta$, we have
\begin{equation}
\E\left[\sup_{t\in[0,T]}||q_t^{\e}-q_t||^\th\right]=O(\e^{\beta}) \ as \ \e\to 0.
\end{equation}
\end{theorem}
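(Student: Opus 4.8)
The plan is to compare the integral forms of \eqref{original}, the exact equation satisfied by $q_t^\e$, and of the limiting equation \eqref{limiting}, noting that both are driven by the \emph{same} L\'evy process $L$ and the same Poisson measure $N$. Subtracting one from the other, I would write $(q_t^\e)_i-(q_t)_i$ as the initial difference $(q_0^\e)_i-(q_0)_i$, plus the difference of the drift integrals, plus the difference of the $dL$--stochastic integrals, plus the difference of the $N$--jump integrals, plus the remainder $(R_t^\e)_i$ coming from \eqref{error}. Applying the elementary inequality $\|\sum_k a_k\|^\th\le C_\th\sum_k\|a_k\|^\th$ together with $\sup_{t\in[0,T]}$ and $\E$, the claim reduces to estimating $\E[\sup_{t\in[0,T]}\|\cdot\|^\th]$ of each piece.

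For the coefficient differences I would exploit that, under Assumptions 3, 5, 6 and 7, every coefficient appearing---$\gamma^{-1}$, $\partial_q(\gamma^{-1})$, $G$, $\sigma$, $F$, $\nabla_qV$ and $\nabla_qK$---is bounded and Lipschitz in its arguments. Writing $x_s^\e=(q_s^\e,p_s^\e)$ and $x_s=(q_s,0)$, each difference of coefficients is controlled by $\tilde C(\|q_s^\e-q_s\|+\|p_s^\e\|)$, the second summand arising because $\sigma$, $F$ and $K$ also depend on the momentum variable, which is set to zero in the limit. Thus every term splits into a \emph{Gronwall part} proportional to $\int_0^t\sup_{r\le s}\|q_r^\e-q_r\|^\th\,ds$ and an \emph{error part} driven by moments of $p_s^\e$. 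For the two $dL$--stochastic integrals and the compensated part of the $N$--jump integral I would invoke Kunita's first inequality, exactly as in Lemma \ref{Esup}; this is legitimate because Assumption 1 guarantees $\int|x|^{2\lor\th}\nu(dx)<\infty$. The compensator part of the $N$--integral becomes an ordinary time integral treated by H\"older's inequality. In each case the resulting bound again produces a Gronwall part plus an error part of the same form.

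Collecting the estimates, I would arrive at an inequality of the shape
\begin{equation*}
\E\Big[\sup_{s\le t}\|q_s^\e-q_s\|^\th\Big]\le\tilde C\,\E\big[\|q_0^\e-q_0\|^\th\big]+\tilde C\int_0^t\E\Big[\sup_{r\le s}\|q_r^\e-q_r\|^\th\Big]\,ds+\mathcal E(\e),
\end{equation*}
where the error $\mathcal E(\e)$ collects $\E[\sup_{t\in[0,T]}\|R_t^\e\|^\th]$ together with the $p^\e$--moment terms. By hypothesis $\E[\|q_0^\e-q_0\|^\th]=O(\e^\beta)$; by Lemma \ref{R} the remainder is $O(\e^\beta)$; and by Proposition \ref{proposition1} the dominant $p^\e$--moment term $\E[\sup_{t\in[0,T]}\|p_t^\e\|^\th]=O(\e^{\frac{\th}{2}(1-1/\eta)})$, whose exponent coincides with the first branch of $\beta(\th)$ and which is balanced against the rate $\e^{1-\th/\eta}$ from Lemma \ref{R}. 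This is precisely where the two branches of the piecewise exponent $\beta(\th)$ and the restriction $\th<\eta$ enter. An application of Gronwall's inequality then yields $\E[\sup_{t\in[0,T]}\|q_t^\e-q_t\|^\th]\le\tilde C\,\mathcal E(\e)\,e^{\tilde C T}=O(\e^\beta)$.

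The main obstacle I anticipate is the bookkeeping of the orders of the various error contributions so that they all match the claimed rate $\e^\beta$: the $p^\e$--dependent terms appear in several moment combinations (such as $\|p^\e\|^\th$, $\|p^\e\|^{2\th}$, and products with powers of $K^\e$), whose orders from Proposition \ref{proposition1} and Lemma \ref{supE} must be compared branch by branch against the definition of $\beta$, with the $\th<1$ versus $\th\ge1$ distinction handled by H\"older's inequality and the integrability for Kunita's inequality supplied by Assumption 1. A secondary technical point is that, since the limiting coefficients are evaluated at $x_t=(q_t,0)$ while the exact ones are evaluated at $x_t^\e=(q_t^\e,p_t^\e)$, one must verify that the surviving noise-independent drift term in $\partial_{q_j}K$ is controlled purely through its Lipschitz dependence (Assumption 6) together with the vanishing momentum moments, rather than relying on any cancellation.
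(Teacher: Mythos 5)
Your proposal follows essentially the same route as the paper's proof: the same decomposition of $q_t^{\e}-q_t$ into initial difference, drift difference, stochastic-integral difference, noise-induced ($N$-integral) difference and the remainder $R_t^{\e}$, the same Lipschitz/Kunita estimates reducing everything to a Gronwall inequality, and the same sourcing of the rate $\e^{\beta}$ from Lemma \ref{R} and Proposition \ref{proposition1}. The only step the paper makes explicit that you omit is the verification that $\E\left[\sup_{t\in[0,T]}||q_t^{\e}-q_t||^{\th}\right]$ is finite so that Gronwall's inequality may legitimately be applied, which the paper checks by bounding $\E\left[\sup_{t\in[0,T]}||q_t^{\e}||^{\th}\right]$ and $\E\left[\sup_{t\in[0,T]}||q_t||^{\th}\right]$ term by term.
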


\begin{proof}
First let $\th\ge2$. Define a vector $\widetilde F(t,x)$ and a matrix $\widetilde \sigma(t,x)$ as follows respectively
\begin{equation*}
\widetilde F_i(t,x)=(\gamma^{-1})_i^j(t,q)(\partial_{p_j}K(t,x)+\partial_{p_j}V(t,q)+F_j(t,x)),
\end{equation*}
\begin{equation*}
\widetilde \sigma_i^{\rho}(t,x)=(\gamma^{-1})_i^j(t,q)\sigma_j^{\rho}(t,x).
\end{equation*}

Hence we can rewrite Eq.\eqref{original} as
\begin{equation}
(q_t^{\e})_i=(q_0^{\e})_i+\int_0^t \widetilde F_i(s,x_s^{\e})ds+\int_0^t\widetilde \sigma_i^{\rho}(s,x_s^{\e})d(L_s)_{\rho}+ S_i(t,x_t^{\e})+(R_t^{\e})_i,
\end{equation}
and Eq.\eqref{limiting} as
\begin{equation}
(q_t)_i=(q_0)_i+\int_0^t\widetilde F_i(s,x_s)ds+\int_0^t\widetilde \sigma_i^{\rho}(s,x_s)d(L_s)_{\rho}+S_i(t,x_t).
\end{equation}
Therefore, we obtain the following estimation
\begin{equation}\label{T2E1}
\begin{aligned}
&\E\left[\sup_{s\in[0,t]}||q_s^{\e}-q_s||^\th\right]\\
&\le \tilde{C}\E\left[\sup_{s\in[0,t]}\left(||q_0^{\e}-q_0||^\th+\left|\left|\int_0^s \widetilde F_i(r,x_r^{\e})-\widetilde F_i(r,x_r)dr\right|\right|^\th+\left|\left|\int_0^s\widetilde \sigma_i^{\rho}(r,x_r^{\e})-\sigma_i^{\rho}(r,x_r)d(L_r)_{\rho}\right|\right|^\th\right.\right.\\
&+\left.\left.||S_i(s,x_s^{\e})-S_i(s,x_s)||^\th+||R_s^{\e}||^\th\right)\right].
\end{aligned}
\end{equation}

By the Lipschitz property of $\widetilde F$ and $\widetilde \sigma$ due to Assumptions, we have
\begin{equation}
\label{T1F}
\begin{aligned}
\E\left[\sup_{s\in[0,t]}\left|\left|\int_0^s \widetilde F_i(r,x_r^{\e})-\widetilde F_i(r,x_r)dr\right|\right|^\th\right]&\le \E\left[\sup_{s\in[0,t]}s^{\th-1}\int_0^s ||\widetilde F_i(r,x_s^{\e})-\widetilde F_i(r,x_s)||^\th ds\right]\\
&\le T^{\th-1}\E\left[\int_0^t\left|\left|F_i(r,x_r^{\e})-\widetilde F_i(r,x_r)\right|\right|^\th dr\right]\\
&\le \tilde{C}\left(\int_0^t\E[\sup_{r\in[0,s]}||q_r^{\e}-q_r||^\th]ds+\sup_{s\in[0,t]}\E[||p_s^{\e}||^\th]\right),
\end{aligned}
\end{equation}
and
\begin{equation}
\label{T1sigma}
\begin{aligned}
&\E\left[\sup_{s\in[0,t]}\left|\left|\int_0^s\widetilde \sigma_i^{\rho}(r,x_r^{\e})-\widetilde\sigma_i^{\rho}(r,x_r)d(L_r)_{\rho}\right|\right|^\th\right]\\
&\le \tilde{C}\E\left[\sup_{s\in[0,t]}\left(\left|\left|\int_0^s\int_{\mathbb{R}^d\backslash\{0\}}(\widetilde \sigma_i^{\rho}(r,x_r^{\e})-\widetilde\sigma_i^{\rho}(r,x_r))x\widetilde N(dr,dx)\right|\right|^\th+\left|\left|\int_0^s\int_{|x|>1}(\widetilde \sigma_i^{\rho}(r,x_r^{\e})-\widetilde\sigma_i^{\rho}(r,x_r))x\nu(dx)dr\right|\right|^\th\right)\right]\\
&\le \tilde{C}\left(\E\left[\left(\int_0^t\int_{\mathbb{R}^d\backslash\{0\}}||\widetilde \sigma_i^{\rho}(s,x_s^{\e})-\widetilde\sigma_i^{\rho}(s,x_s)||^2|x|^2\nu(dx)ds\right)^{\frac{\th}{2}}\right]\right.\\
&+ \left.\E\left[\int_0^t\int_{\mathbb{R}^d\backslash\{0\}}||\widetilde \sigma_i^{\rho}(s,x_s^{\e})-\widetilde\sigma_i^{\rho}(s,x_s)||^\th|x|^\th\nu(dx)ds\right]
+\E\left[\int_0^t\left|\left|\int_{|x|>1}x\nu(dx)(\widetilde \sigma_i^{\rho}(s,x_s^{\e})-\widetilde\sigma_i^{\rho}(s,x_s))\right|\right|^\th ds\right]\right)\\
&\le \tilde{C}\E\left(\int_0^t||\widetilde \sigma_i^{\rho}(s,x_s^{\e})-\widetilde\sigma_i^{\rho}(s,x_s)||^\th ds\right)\\
&\le \tilde{C}\left(\int_0^t\E[\sup_{r\in[0,s]}||q_r^{\e}-q_r||^\th]dr+\sup_{s\in[0,t]}E[||p_s^{\e}||^\th]\right).
\end{aligned}
\end{equation}
We can also get a similar bound for the noise-induced term
\begin{equation}
\label{T1S}
\E\left[\sup_{s\in[0,t]}||S_i(s,x_s^{\e})-S_i(s,x_s)||^\th\right]
\le \tilde{C}\left(\int_0^t\E[\sup_{r\in[0,s]}||q_r^{\e}-q_r||^\th]dr+\sup_{s\in[0,t]}\E[||p_s^{\e}||^\th]\right)\\.
\end{equation}
Consequently, estimations \eqref{T1F}-\eqref{T1S} together with Proposition \ref{proposition1} and Lemma \ref{R} yield that
\begin{equation}
\begin{aligned}
\E\left[\sup_{s\in[0,t]}||q_s^{\e}-q_s||^\th\right]\le \tilde{C}\int_0^t\E\left[\sup_{r\in[0,s]}||q_r^{\e}-q_r||^\th\right]ds+O(\e^\beta),
\end{aligned}
\end{equation}
for all $t\in[0,T]$. If $\E\left[\sup_{s\in[0,t]}||q_s^{\e}-q_s||^\th\right]\in L^1[0,T]$. Then Gronwall's inequality implies
\begin{equation}
\E\left[\sup_{s\in[0,t]}||q_s^{\e}-q_s||^\th\right]\le O(\e^\beta)e^{\tilde{C}t},
\end{equation}
which is precisely the result we want to prove.
Indeed,
\begin{equation*}
\begin{aligned}
\E\left[\sup_{t\in[0,T]}||q_t^\e||^\th\right]&\le C\left(\E\left[\sup_{t\in[0,T]}||q_0^\e||^\th\right]+\E\left[\sup_{t\in[0,T]}\left|\left|\int_0^t \widetilde F(s,x_s^\e)ds\right|\right|^\th\right]\right.\\
&\left.+\E\left[\sup_{t\in[0,T]}\left|\left|\int_0^t\widetilde \sigma^{\rho}(s,x_s^\e)d(L_s)_{\rho}\right|\right|^\th\right]+\E\left[\sup_{t\in[0,T]}|| S(t,x_t^\e)||^\th\right]+\E\left[\sup_{t\in[0,T]}||(R_t^\e)||^\th\right]\right)\\
&< \infty,
\end{aligned}
\end{equation*}
and similarly we can get $\E\left[\sup_{t\in[0,T]}||q_t||^\th\right]<\infty$.

\end{proof}

\subsection{\textbf{Extension}}
In this section, we relax some assumptions that we make before. Actually we can extend all Lipschitz conditions to locally Lipschitz condition and remove all boundedness conditions. Organize and summarize the assumptions in the previous article, now we give a complete theorem.
\begin{theorem}
\label{result2}
{\bf (Convergence to the limit equation in probability)}
Suppose the family of Hamiltonians have the form
\begin{equation*}
H^{\e}(t,q,p)=K^{\e}(t,q,p)+V(t,q)=K(\e,t,q,p/\sqrt{\e})+V(t,q),
\end{equation*}
and the following conditions hold:\\
1. The function $K^{\e}(t,q,p)$  is non-negative and $\mathcal{C}^2$.\\
2. There exist constant $C>0, M_1>0$ such that
\begin{equation*}
\max{\{|\partial_tK(\e,t,q,z)|,||\nabla_qK(\e,t,q,z)||,||\nabla_zK(\e,t,q,z)||\}}\le M_1+CK(\e,t,q,z).
\end{equation*}
3. There exist constant $c>0, M_2\ge0$ such that
\begin{equation*}
||\nabla_zK(\e,t,q,z)||^2+M_2\ge cK(\e,t,q,z).
\end{equation*}
4. For every $T>0$, there exist constant $c>0, \eta>1$ such that
\begin{equation*}
K(\e,t,q,z)\ge c||z||^{\eta}.
\end{equation*}
5. The potential energy function $V(t,q)$ is $\mathcal{C}^1$.\\
6. The dissipative coefficient $\gamma$ is $\mathcal{C}^2$, independent of $p$ and symmetric with eigenvalues bounded below by a constant $\lambda>0$.\\
7. The external force $F$ and noise intensity coefficient $\sigma$ are continuous and locally Lipschitz.\\
Let $x_t^{\e}$ be the solution of SDE \eqref{slo} with initial condition $(p_0^{\e},q_0^{\e})$ and $q_t$ be the solution of SDE \eqref{limiting}with initial condition $q_0$. Also suppose that for every $\e>0$ and $\th\in(0,\eta)$, the initial condition satisfies integrable conditions $\E[||q_0^{\e}||^\th]<\infty, \E[||q_0||^\th]<\infty$ and $\E[||q_0^{\e}-q_0||^\th]=O(\e^\beta)$.
Then  for every $T>0,\delta>0$ we have
\begin{equation}
\lim_{\e\to 0} \P\left(\sup_{t\in[0,T]}||q_t^{\e}-q_t||>\delta\right)=0.
\end{equation}
\end{theorem}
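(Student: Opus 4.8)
The plan is to deduce convergence in probability from the moment convergence of Theorem \ref{result1} by a localization (stopping-time) argument, using the non-explosion of both the prelimit and the limit systems to dispose of the event on which the trajectories leave a large ball. First I would freeze a radius $R>0$ and introduce a smooth cutoff $\chi_R:\mathbb{R}^{2n}\to[0,1]$ with $\chi_R\equiv1$ on $B_R$ and $\chi_R\equiv0$ off $B_{2R}$. With it I construct modified coefficients $\gamma_R,F_R,\sigma_R,V_R$ (and, where needed, a modified $K_R$) that agree with the original data on $B_R$ and satisfy the full strength of Assumptions 1--7 globally; in particular $\gamma_R$ is made to interpolate towards $\lambda I$ outside $B_{2R}$, so that its lower eigenvalue bound $\lambda$ and $\mathcal{C}^2$-boundedness survive, and the cutoff is arranged so as not to spoil the coercivity $K\ge c\|z\|^\eta$ nor the growth bounds on the derivatives of $K$. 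Let $x_t^{\e,R}=(q_t^{\e,R},p_t^{\e,R})$ and $q_t^{R}$ solve the modified versions of \eqref{slo} and \eqref{limiting}, driven by the same noise and started from the same data. Since the modified system meets every hypothesis of Theorem \ref{result1} and Proposition \ref{proposition1}, I may invoke them to obtain, for each fixed $R$,
\[\E\Big[\ssup\|q_t^{\e,R}-q_t^{R}\|^\th\Big]=O(\e^{\beta}),\qquad \E\Big[\ssup\|p_t^{\e,R}\|^\th\Big]=O(\e^{\frac{\th}{2}-\frac{\th}{2\eta}}).\]

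Next I define the exit times $\tau_R^\e=\inf\{t\ge0:\|x_t^{\e,R}\|\ge R\}$ and $\zeta_R=\inf\{t\ge0:\|q_t^{R}\|\ge R\}$. By pathwise uniqueness for \eqref{slo} and \eqref{limiting}, the modified and original trajectories coincide before their exit times, so on $\{\tau_R^\e>T\}\cap\{\zeta_R>T\}$ one has $q_t^\e=q_t^{\e,R}$ and $q_t=q_t^{R}$ throughout $[0,T]$. Consequently
\[\P\Big(\ssup\|q_t^\e-q_t\|>\delta\Big)\le \P\Big(\ssup\|q_t^{\e,R}-q_t^{R}\|>\delta\Big)+\P(\tau_R^\e\le T)+\P(\zeta_R\le T).\]
The first term is $\le\delta^{-\th}\E[\ssup\|q_t^{\e,R}-q_t^{R}\|^\th]=\delta^{-\th}O(\e^\beta)\to0$ as $\e\to0$ for fixed $R$, by Markov's inequality and Theorem \ref{result1}. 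The last term is independent of $\e$ and tends to $0$ as $R\to\infty$, because the limiting equation \eqref{limiting} admits a unique non-exploding solution with $\E[\ssup\|q_t\|^\th]<\infty$ (this is established at the close of the proof of Theorem \ref{result1}, and on $\{\zeta_R>T\}$ one has $q^R\equiv q$).

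The crux is the uniform-in-$\e$ control of the middle term $\P(\tau_R^\e\le T)$. Splitting $\|x_t^{\e,R}\|\le\|q_t^{\e,R}\|+\|p_t^{\e,R}\|$, I bound $\P(\ssup\|p_t^{\e,R}\|\ge R/2)$ by Markov's inequality and the momentum estimate above, which is $O(\e^{\th/2-\th/(2\eta)})$ and hence vanishes as $\e\to0$ for fixed $R$ (using $\eta>1$). For the position I compare with $q_t^R$: $\P(\ssup\|q_t^{\e,R}\|\ge R/2)\le \P(\ssup\|q_t^{R}\|\ge R/4)+(R/4)^{-\th}O(\e^\beta)$, where the first piece is small for $R$ large by non-explosion of the limit and the second vanishes as $\e\to0$.

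Assembling the quantifiers in the correct order completes the argument: given $\rho>0$, I first choose $R$ so large that $\P(\zeta_R\le T)$ and the two $R\to\infty$ contributions are each below $\rho/4$, and then pick $\e_0$ so that for $\e<\e_0$ every remaining $\e$-dependent term (all of order $O(\e^{\min\{\beta,\,\th/2-\th/(2\eta)\}})$) lies below $\rho/4$; since $\rho$ is arbitrary the probability tends to $0$. The conclusion holds for every $\th\in(0,\eta)$, so one is free to fix any convenient exponent. I expect the principal difficulty to lie in constructing the cutoff coefficients so that the localized system genuinely verifies \emph{all} of Assumptions 1--7 --- keeping $\gamma_R$ uniformly elliptic and $K$ coercive with the required growth bounds --- together with the uniform-in-$\e$ smallness of $\P(\tau_R^\e\le T)$, which is precisely where the non-explosion of the prelimit momentum (Proposition \ref{proposition1}) is indispensable.
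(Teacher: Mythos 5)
Your proposal is correct in outline and follows the same overall strategy as the paper: cut off the coefficients so that Theorem \ref{result1} and Proposition \ref{proposition1} apply to the localized system, identify localized and original trajectories up to exit times, and kill the exit events via the momentum estimate and non-explosion of the limit. The genuine difference is in how the momentum exit is localized. The paper cuts off $K$ in the rescaled variable $z=p/\sqrt{\e}$, so its modified and original systems agree only on the $\e$-dependent region $\{\|q\|<r,\ \|p\|<\e r\}$; its momentum stopping time is $\eta_r^{\e}=\inf\{t:\|p_t^{\e}\|\ge \e r\}$, Chebyshev gives $\P(\eta_r^{\e}\le T)=O(\e^{-1-1/\eta})r^{-2}$, and the proof must send $r\to\infty$ \emph{jointly} with $\e\to0$ at rate $r^{-1}=o(\e^{(1+1/\eta)/2})$. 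You instead keep a fixed agreement ball with fixed momentum threshold $R/2$, so Markov plus $\E\big[\ssup\|p_t^{\e,R}\|^{\th}\big]=O(\e^{\th(\frac12-\frac{1}{2\eta})})$ kills the exit probability as $\e\to0$ for each fixed $R$, and the limits decouple (first $\e\to0$, then $R\to\infty$). This buys something real: in the paper's coupled limit the constants hidden in the $O(\e^{\beta})$ of Theorem \ref{result1} depend on $r$ (sup-norms, Lipschitz constants of the cutoff coefficients, the Gronwall factor $e^{\tilde{C}T}$), so one must check they do not grow too fast along $r=r(\e)$ --- a point the paper never addresses; your quantifier order avoids it. The price is exactly the difficulty you flag: your cutoff of $K$ must be done in the unscaled momentum, i.e. $K_R(\e,t,q,z)=\chi_R(\sqrt{\e}z)K(\e,t,q,z)$, and its hypotheses must hold uniformly in $\e$; the derivative bounds are fine (each $z$-derivative of $\chi_R(\sqrt{\e}z)$ carries a factor $\sqrt{\e}$), but a plain multiplicative cutoff destroys the coercivity $K\ge c\|z\|^{\eta}$ and must be repaired by adding a term like $(1-\chi_R(\sqrt{\e}z))\,c'\|z\|^{\eta'}$ compatible with conditions 2--4. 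Note the paper's own cutoff $\chi_r(z)K$ has the same defect while still invoking Proposition \ref{proposition1}, so this is a shared gap rather than a flaw of your route; likewise, your reduction of $\P(\zeta_R\le T)$ and $\P\big(\ssup\|q_t^{R}\|\ge R/4\big)$ to non-explosion of $q_t$ needs the (easy) intermediate identification of $q^{R}$ with $q$ up to the exit of $q^{R}$ from the relevant ball, which you should state explicitly.
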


\begin{proof}
Let $\chi: \mathbb{R}^n\to [0,1]$ be a $C^{\infty}$ function.
Define
\begin{equation*}
\begin{aligned}
V_r(t,q)=\chi_r(q)V(t,q), F_r(t,x)=\chi_r(q)\chi_r(p)F(t,x), \sigma_r(t,x)=\chi_r(q)\chi_r(p)\sigma(t,x),\\
K(\e,t,q,z)=\chi_r(z)K(\e,t,q,z), \gamma_{r}(t,q)=\chi_r(q)\gamma(t,q)+(1-\chi_r(q))\lambda I
\end{aligned}
\end{equation*}
Replacing the function $V,F,K,\gamma,\sigma$ in \eqref{slo} by $V_r, F_r, K_r, \gamma_r, \sigma_r$, we arrive at an SDE satisfying the condition in Theorem \ref{result1}. Let $x_t^{r,\e}$ be solution to the corresponding SDE. Similarly, let $q_t^r$ be the solution to the corresponding limiting SDE \eqref{limiting}. Proposition \ref{proposition1} and Theorem \ref{result1} imply that, for every $T>0, \ \eta>1$ and $\th\in(0,\eta)$
\begin{equation}\label{extension1}
\E\left[\sup_{t\in[0,T]}||p_t^{r,\e}||^\th\right]=O(\e^{\frac{\th}{2}-\frac{\th}{2\eta}}) \ \text{as} \ \e\to 0,
\end{equation}
and
\begin{equation}\label{extension2}
\E\left[\sup_{t\in[0,T]}||q_t^{r,\e}-q_t^r||^\th\right]=O(\e^\beta) \ \text{as} \ \e\to 0.
\end{equation}
We will use this result to prove that $q_t^{\e}$ converges to $q_t$ in probability.

Denfine stopping times $\tau_r^{\e}=\inf\{t:||q_t^{\e}||\ge r\}$, $\eta_r^{\e}=\inf\{t:||p_t^{\e}||\ge \e r\}$ and $\tau_r=\inf\{t:||q_t||\ge r\}$. The drifts and diffusions of the modified and unmodified SDEs agree on the ball $\{||q||<r, ||p||<\e r\}$. Hence
\begin{equation*}
q_{\tau_r^{\e}\land\eta_r^{\e}\land t}^{\e}=q_{\tau_r^{\e}\land\eta_r^{\e}\land t}^{r,\e}, \ q_{\tau_r\land t}=q_{\tau_r\land t}^r \ \text{for all} \ t\ge0 \ \text{a.s.}
\end{equation*}
For every $T>0, \delta>0$, we deduce that
\begin{equation}\label{E1}
\begin{aligned}
&\P\left(\sup_{t\in[0,T]}||q_t^{\e}-q_t||>\delta\right)\\
=&\P\left(\tau_r\land\tau_r^{\e}\land\eta_r^{\e}>T, \sup_{t\in[0,T]}||q^{\e}_{\tau_r^{\e}\land\eta_r^{\e}\land t}-q_{\tau_r\land t}||>\delta\right)+\P\left(\tau_r\land\tau_r^{\e}\land\eta_r^{\e}\le T, \sup_{t\in[0,T]}||q_t^{\e}-q_t||>\delta\right)\\
=&\P\left(\tau_r\land\tau_r^{\e}\land\eta_r^{\e}>T, \sup_{t\in[0,T]}||q^{r,\e}_t-q_t^r||>\delta\right)+\P\left(\tau_r\land\tau_r^{\e}\land\eta_r^{\e}\le T, \sup_{t\in[0,T]}||q_t^{\e}-q_t||>\delta\right)\\
\le&\P\left(\sup_{t\in[0,T]}||q^{r,\e}_t-q_t^r||>\delta\right)+\P\left(\tau_r\land\tau_r^{\e}\land\eta_r^{\e}\le T\right),
\end{aligned}
\end{equation}
where the first term on the right hand side converges to 0 as $\e\to 0$ by \eqref{extension2}. Then we focus on the second term,
\begin{equation}\label{E2}
\begin{aligned}
&\P\left(\tau_r\land\tau_r^{\e}\land\eta_r^{\e}\le T\right)\\
=&\P(\tau_r\le T)+\P\left(\tau_r>T, \tau_r^{\e}\land \eta_r^{\e}\le T\right)\\
\le&\P(\tau_r\le T)+\P\left(\sup_{t\in[0,T]}||q_t^{r,\e}-q_t^r||>1\right)+\P\left(\tau_r>T, \tau_r^{\e}\land\eta_r^{\e}\le T,\sup_{t\in[0,T]}||q_t^{r,\e}-q_t^r||\le 1\right)\\
\le&\P\left(\sup_{t\in[0,T]}||q_t^r||>r\right)+\P\left(\sup_{t\in[0,T]}||q_t^{r,\e}-q_t^r||>1\right)+ \P\left(\tau_r^{\e}\land\eta_r^{\e}\le T, ||q_{\tau_r^{\e}\land\eta_r^{\e}\land T}^{r,\e}-q^r_{\tau_r^{\e}\land\eta_r^{\e}\land T}||\le 1\right)\\
\le&\P\left(\sup_{t\in[0,T]}||q_t^r||>r\right)+\P\left(\sup_{t\in[0,T]}||q_t^{r,\e}-q_t^r||>1\right)+ \P\left(\eta_r^{\e}>T, \tau_r^{\e}\le T, ||q_{\tau_r^{\e}\land T}^{r,\e}-q^r_{\tau_r^{\e}\land T}||\le 1\right)\\
&+\P\left(\eta_r^{\e}\le T, ||q_{\tau_r^{\e}\land\eta_r^{\e}\land T}^{r,\e}-q_{\tau_r^{\e}\land\eta_r^{\e}\land T}||\le 1\right).
\end{aligned}
\end{equation}
Note that when $\tau_r^{\e}\le T$, we have  $||q_{\tau_r^{\e}\land T}||\ge r$. Hence by $||q_{\tau_r^{\e}\land T}^{r,\e}-q^r_{\tau_r^{\e}\land T}||\le 1$, we can deduce
\begin{equation*}
||q^r_{\tau_r^{\e}\land T}||\ge ||q_{\tau_r^{\e}\land T}^{r,\e}||-||q_{\tau_r^{\e}\land T}^{r,\e}-q^r_{\tau_r^{\e}\land T}||> r-1.
\end{equation*}
This implies that
\begin{equation}\label{E3}
\P\left(\tau_r^{\e}\le T, ||q_{\tau_r^{\e}\land T}^{r,\e}-q^r_{\tau_r^{\e}\land T}||\le 1\right)\le \P\left(||q^r_{\tau_r^{\e}\land T}||>r-1\right)\le \P\left(\sup_{t\in[0,T]}||q^r_t||>r-1\right).
\end{equation}
Combining \eqref{E1},\eqref{E2} and \eqref{E3} together, we have
\begin{equation}
\begin{aligned}
&\P\left(\sup_{t\in[0,T]}||q_t^{\e}-q_t||>\delta\right)\\
&\le \P\left(\sup_{t\in[0,T]}||q^{r,\e}_t-q_t^r||>\delta\right)+\P\left(\sup_{t\in[0,T]}||q_t^r||>r\right)+\P\left(\sup_{t\in[0,T]}||q_t^{r,\e}-q_t^r||>1\right)\\
&+\P\left(\sup_{t\in[0,T]}||q^r_t||>r-1\right)+\P\left(\eta_r^{\e}\le T\right).
\end{aligned}
\end{equation}
On the other hand, by Chebyshev inequality and \eqref{extension1}, we have
\begin{equation}
\P\left(\eta_r^{\e}\le T\right)\le \P\left(\sup_{t\in[0,T]}||p_t^{r,\e}||>\e r\right)\le (\e r)^{-2}\E\left[\sup_{t\in[0,T]}||p_t^{r,\e}||^2\right]=O(\e^{-1-\frac{1}{\eta}})r^{-2}.
\end{equation}
Then if we let $r^{-1}=o(\e^{\frac{1}{2}\left(1+\frac{1}{\eta}\right)})$, i.e., the speed of $r$ goes to infinity faster than $\e^{-\frac{1}{2}\left(1+\frac{1}{\eta}\right)}$.  We have
\begin{equation}
\P\left(\sup_{t\in[0,T]}||q_t^{\e}-q_t||>\delta\right)\to 0 \ \text{as} \ r\to\infty, \ \e\to 0
\end{equation}
by the non-explosion property of $q_t^r$.
\end{proof}

\section{An Example}
In this section, we present a prototypical example with Hamiltonian $H(m,t,q,p)=\frac{p^2}{2m}+V(t,q)$, where $m$ is the mass of a particle. In this case, the small mass limit is also called Smoluchowski-Kramers limit. We consider the stochastic Hamiltonian system with external force $F(t,x)$ and L\'evy noise $L_t$
\begin{equation}
\begin{aligned}
&dq_t^m=\frac{1}{m}p_t^mdt,\\
&dp_t^m=\left(\frac{1}{m}\gamma(t,q_t^m)p_t^m-\nabla_qV(t,q_t^m)+F(t,x_t^m)\right)dt+\sigma(t,x_t^m)dL_t.
\end{aligned}
\end{equation}
By Proposition \ref{proposition1}, $p_t^m$ converges to zero. Then the homogenized equation in the small mass limit is
\begin{equation}
dq_t=\gamma^{-1}(t,q_t)(\nabla_qV(t,q_t)+F(t,q_t,0))dt+\gamma^{-1}(t,q_t)\sigma(t,q_t,0)dL_t+S(t,q_t),
\end{equation}
where the noise induced drift is
\begin{equation}\label{noisedriftex}
S_i(t,q_t)=\int_0^t \int_{\mathbb{R}^d\backslash\{0\}} \partial_{q^h}(\gamma^{-1})_i^j(t,q_t)\int_0^{\infty}\left(e^{-y\gamma(s,q_s)}\right)_j^a\left(e^{-y\gamma(s,q_s)}\right)_l^bdy \sigma_a^k(s,q_t,0)\sigma_b^l(s,q_t,0)z_kz_lN(ds,dz).
\end{equation}

Moreover, when dissipative coefficient $\gamma$ is independent of $q$, the noise-induced drift \eqref{noisedriftex} vanish, and the homogenized equation becomes
\begin{equation}
dq_t=\gamma^{-1}(t)(\nabla_qV(t,q_t)+F(t,q_t,0))dt+\gamma^{-1}(t)\sigma(t,q_t,0)dL_t.
\end{equation}
This result coincide with that in \cite{zhang2008smoluchowski}.

\section{Conclusion and Discussion}
In this paper, we derive the small mass limiting equation for a class of Hamiltonian systems with multiplicative L\'evy noise. Some interesting results appear. If the Hamiltonian function $H(\e,q,p)$ possesses appropriate properties, then momentum $p$ will always converge to zero in finite time under uniform norm. The noise-induced drift term induced by pure jump L\'evy noise is a Poisson process, which is rather different from that induced by Gaussian noise \cite{birrell2018homogenization}. Our results could be applied to a class of stochastic Hamiltonian systems, such as a small mass particle in force field with state-dependent friction and a particle on a Riemannian manifold.

However, we have to mention that the pure jump L\'evy noises in this paper have finite moment. In other words, it has bounded jumps. Large jumps could lead to some unpredictable dynamics although interlacing techniques allow us to deal with it. Hence an interesting problem is that how to accurately deal with L\'evy noise without finite moments such as $\alpha$-stable L\'evy noise, which will be studied in the future.

\section*{Acknowledgments}
The authors would like to thank Lingyu Feng, Jianyu Hu, Pingyuan Wei, Shenglan Yuan and Yanjie Zhang for helpful discussions. This work was partly supported by NSFC grants 11771449 and 11531006.

\section*{Appendix}
\begin{appendices}

\section{Non-explosion of solution}
\renewcommand\thesection{\Alph{section}}
In Appendix, we will prove that the solution of SDE \eqref{slo} and limit equation are existence and unique under Assumption 1-4.
\begin{lemma}
Under Assumption 1-4, there exists a unique non-explosive solution to \eqref{slo} in finite time interval $[0,T]$.
\end{lemma}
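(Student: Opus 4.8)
The plan is to obtain local existence and uniqueness from the regularity of the coefficients, and then to rule out explosion on $[0,T]$ by a Lyapunov estimate built on the kinetic energy. First I would observe that every coefficient appearing in \eqref{slo} is continuous and locally Lipschitz: the drift $\nabla_pH^{\e}=\tfrac{1}{\sqrt\e}(\nabla_zK)^{\e}$, the dissipative term $\gamma\nabla_pH^{\e}$, the gradient $\nabla_qH^{\e}=\nabla_qK^{\e}+\nabla_qV$, the forcing $F$, and the jump coefficient $\sigma$. Indeed $H^{\e}$ is $\cC^2$ by Assumption 2, so its gradients are $\cC^1$ and hence locally Lipschitz, while $\gamma,F,\sigma$ are globally Lipschitz by Assumption 3; products and compositions of such functions remain locally Lipschitz. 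Since $L$ is a pure-jump L\'evy process with finite $2\lor\th$-moment (Assumption 1), the standard theory of L\'evy-driven SDEs \cite{applebaum2009levy} gives, for each $n$, a unique c\`adl\`ag solution up to the stopping time $\tau_n=\inf\{t:\|x_t^{\e}\|\ge n\}$; these patch into a unique maximal solution on $[0,\tau_\infty)$, where $\tau_\infty=\lim_{n\to\infty}\tau_n$. The goal is then to show $\tau_\infty\ge T$ almost surely.

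The key difficulty is that the drift grows super-linearly: through $\nabla_zK$ and $\nabla_qK$ it grows like $K$, and by Assumption 4 $K$ itself grows at least like $\|z\|^{\eta}$. Hence the global linear-growth hypothesis of the classical non-explosion theorem fails, and I must supply a Lyapunov function. The natural choice is the kinetic energy $K^{\e}(t,x_t^{\e})$ itself. Applying It\^o's formula to $e^{\alpha t/\e}K^{\e}(t,x_t^{\e})$ up to $\tau_n$ — exactly the computation carried out in the proof of Lemma \ref{L1} with $\th=1$ — the decisive term is the dissipation $-\tfrac{1}{\e}(\nabla_zK)^{\e}\gamma(\nabla_zK)^{\e}$, which by the lower eigenvalue bound $\lambda$ in Assumption 3 together with the coercivity $\|\nabla_zK\|^2+M_2\ge cK$ of Assumption 2 contributes at most $-\tfrac{\lambda c}{\e}K^{\e}+\tfrac{\lambda M_2}{\e}$. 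For $\e$ and $\alpha$ small this negative term dominates all positive contributions (the derivatives of $K$, the bounded forcing $-\nabla_qV+F$, and the jump compensator terms, each controlled by $M_1+CK^{\e}$ via Assumption 2). Taking expectations kills the compensated-Poisson martingale, and Gronwall's inequality then yields $\sup_{t\le T}\E\bigl[K^{\e}(t\land\tau_n,x_{t\land\tau_n}^{\e})\bigr]\le \tilde{C}$ with $\tilde{C}$ independent of $n$.

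With this uniform energy bound in hand, non-explosion follows quickly. By Assumption 4, $\|p_t^{\e}\|^{\eta}\le \e^{\eta/2}c^{-1}K^{\e}(t,x_t^{\e})$, so $\E\bigl[\|p_{t\land\tau_n}^{\e}\|^{\eta}\bigr]$ is bounded uniformly in $n$. For the position, integrating the first line of \eqref{slo} and using $\|\nabla_zK\|\le M_1+CK$ gives $\|q_{t\land\tau_n}^{\e}\|\le\|q_0^{\e}\|+\tfrac{1}{\sqrt\e}\int_0^{t\land\tau_n}(M_1+CK^{\e})\,ds$, whose expectation is again bounded uniformly in $n$ by the energy estimate. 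Hence $\sup_{t\le T}\E\bigl[\|x_{t\land\tau_n}^{\e}\|\bigr]\le\tilde{C}$, and Markov's inequality gives $\P(\tau_n\le T)\le \tilde{C}/n\to0$. Therefore $\tau_\infty\ge T$ almost surely, the maximal solution is defined on all of $[0,T]$, and uniqueness on $[0,T]$ follows by routine patching of the locally unique solutions. The main obstacle throughout is precisely this super-linear growth of the drift: everything hinges on extracting enough dissipation from the Hamiltonian structure to build the Lyapunov estimate, which is why the eigenvalue bound on $\gamma$ and the coercivity of $K$ in $z$ are indispensable.
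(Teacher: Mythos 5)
Your proposal is correct and follows the same overall strategy as the paper---local well-posedness from locally Lipschitz coefficients, then non-explosion via a kinetic-energy Lyapunov bound and a Markov/Chebyshev estimate on the exit times $\tau_n$---but the execution of the non-explosion step is genuinely different. The paper introduces the combined Lyapunov function $U^{\e}(t,x)=\|q\|^{2\eta}+K^{\e}(t,x)$, observes via Assumption 4 that $U^{\e}(\tau_n,x^{\e}_{\tau_n})\ge c\,n^{2\eta}$, and bounds $\E\left[U^{\e}(t\land\tau_n\land T,x^{\e}_{t\land\tau_n\land T})\right]$ by appealing to the already-established global estimates (the bound on $\E\left[\sup_{t\in[0,T]}K^{\e}(t,x^{\e}_t)\right]$ from Lemma \ref{Esup}, together with a bound on $\E\left[\sup_{t\in[0,T]}\|q^{\e}_t\|^{2\eta}\right]$), so that Chebyshev yields $\P(\tau_n<t)\le c^{-1}n^{-2\eta}O(1)\to 0$. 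You instead re-run the It\^o computation of Lemma \ref{L1} on the \emph{stopped} process to obtain $\sup_{t\le T}\E\left[K^{\e}(t\land\tau_n,x^{\e}_{t\land\tau_n})\right]\le\tilde{C}$ uniformly in $n$, convert this into first-moment bounds on $p^{\e}$ (via Assumption 4) and on $q^{\e}$ (integrating the $q$-equation and using $\|\nabla_zK\|\le M_1+CK$ together with Fubini), and conclude with Markov's inequality $\P(\tau_n\le T)\le\tilde{C}/n$. Your route buys something real: the estimates of Lemmas \ref{L1}--\ref{Esup} are stated for a solution assumed to exist on all of $[0,T]$, so invoking them inside a non-explosion proof is circular unless one localizes exactly as you do; your stopped-process formulation closes that logical gap (and your justification that the stopped compensated-Poisson integral is a true martingale, via the Lipschitz bound of Assumption 2.3 and the moment condition of Assumption 1, is sound). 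What you give up is only brevity---the paper reuses its lemmas rather than repeating the energy computation---and the faster decay rate $n^{-2\eta}$ from the polynomial weight, which is immaterial since only convergence to zero is required.
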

\begin{proof}
First, we can verify that SDE with Assumption 1-3 satisfies Lipschitz condition and one side growth condition (refer to \cite{applebaum2009levy}) in every bounded cylinder $I\times U(R)$, where $U(R)$ is a ball with radius $R$. Then, we will prove that there is no explosion. Let $\tau_n$ be the first exit time of $x_t^{\e}$ from the ball $B(0,n)$.
From the right-continuity of the process $x^{\e}_t$ we infer that
\begin{equation}
|x^{\e}_{\tau_n}|\ge n.
\end{equation}
Define a function $U^{\e}(t,x_t^{\e})=||q_t^{\e}||^{2\eta}+K^{\e}(t,x_t^{\e})$.
By Assumption 4, we obtain that
\begin{equation}
\begin{aligned}
U^{\e}(\tau_n,x^{\e}_{\tau_n})&=||q_{\tau_n}^{\e}||^{2\eta}+K^{\e}(\tau_n,x^{\e}_{\tau_n})\\
&\ge ||q_{\tau_n}^{\e}||^{2\eta}+c\e^{-\eta}||p_{\tau_n}^{\e}||^{2\eta}\\
&\ge \min\{1,c\e^{-\eta}\}||x_{\tau_n}^{\e}||^{2\eta}\\
&\ge c|n|^{2\eta}.
\end{aligned}
\end{equation}
On the other hand, we have
\begin{equation}
\begin{aligned}
&\E\left[U^{\e}(t\land\tau_n\land T,x^{\e}_{t\land\tau_n\land T})\right]\\
&=\E\left[U^{\e}(t\land\tau_n\land T,x^{\e}_{t\land\tau_n\land T})1_{\{\tau_n\land T\ge t\}}\right]+\E\left[U^{\e}(t\land\tau_n\land T,x^{\e}_{t\land\tau_n\land T})1_{\{\tau_n\land T< t\}}\right]\\
&=\E\left[U^{\e}(t,x^{\e}_t)1_{\{\tau_n\land T\ge t\}}\right]+\E\left[U^{\e}(\tau_n\land T,x^{\e}_{\tau_n\land T})1_{\{\tau_n\land T< t\}}\right]\\
&=\E\left[U^{\e}(t,x^{\e}_t)1_{\{\tau_n\land T\ge t\}}\right]+\E\left[U^{\e}(\tau_n,x^{\e}_{\tau_n})1_{\{\tau_n<T\}}1_{\{\tau_n< t\}}\right]+\E\left[U^{\e}(T,x^{\e}_{T})1_{\{\tau_n\ge T\}}1_{\{T< t\}}\right]\\
&\ge \E\left[U^{\e}(\tau_n,x^{\e}_{\tau_n})1_{\{\tau_n< t\}}\right].
\end{aligned}
\end{equation}
Therefore, for all $n\in\mathbb{N}$
\begin{equation}
\P(\tau_n<t)\le c^{-1}n^{-2\eta}\E\left[U^{\e}(t\land\tau_n\land T,x^{\e}_{t\land\tau_n\land T})\right].
\end{equation}
Notice that by Theorem \ref{Esup} we have
\begin{equation}
\E\left[U^{\e}(t\land\tau_n\land T,x^{\e}_{t\land\tau_n\land T})\right]\le \E\left[\sup_{t\in[0,T]}||q_t^{\e}||^{2\eta}\right]+\E\left[\sup_{t\in[0,T]}K^{\e}(t,x_t^{\e})\right]=O(1).
\end{equation}
Hence,
\begin{equation}
\lim_{n\to\infty}\P(\tau_n<t)=0 \ \text{for all} \ t.
\end{equation}
That is the desired assertion, as required.
\end{proof}

\section{Proofs of \eqref{J4} and \eqref{J7}}
We give calculations for estimations of \eqref{J4} and \eqref{J7} in remainder term.\\
{\bf{Proof of \eqref{J4}}.}
By Assumption 7 we can deduce that the function $\partial_q(\gamma^{-1})(t,q)G(t,q)$ is bounded and $\mathcal{C}^1$. Let $f(t,q)=\partial_q(\gamma^{-1})(t,q)G(t,q)$. We have
\begin{equation}
\begin{aligned}
J_4&=\E\left[\sup_{t\in[0,T]}\left|\left|\int_0^t\partial_{q^h}(\gamma^{-1})_i^j(s,q_s^{\e})G_{jh}^{ab}(s,q_s^{\e})d((p_s^{\e})_a(p_s^{\e})_b)\right|\right|^\th\right]\\
&\le \E\left[\sup_{t\in[0,T]}\left|\int_0^t f(s,q_s^\e)d((p_s^{\e})_i(p_s^{\e})_j)\right|^\th\right].
\end{aligned}
\end{equation}
Since $f(s,q_s^{\e})$ is a $C^1$-semimartingale, using integration by parts formula we obtain
\begin{equation}
\begin{aligned}
\int_0^t f(s,q_s^\e)d((p_s^{\e})_i(p_s^{\e})_j)&=f(t,q_t^\e)(p_t^{\e})_i(p_t^{\e})_j -f(0,q_0^\e)(p_0^{\e})_i(p_0^{\e})_j\\
&-\int_0^t (p_s^{\e})_i(p_s^{\e})_j\left(\partial_sf(s,q_s^{\e})+\nabla_qf(s,q_s^{\e})\nabla_pH^{\e}(s,x_s^{\e})\right)ds.
\end{aligned}
\end{equation}
Hence, for $\th\ge1$, we have
\begin{equation}
\begin{aligned}
J_4&\le 3^{\th-1}\left(2||f||_{\infty}^\th\E[\sup_{t\in[0,T]}||p_t^{\e}||^{2\th}]+ \E\left[\sup_{t\in[0,T]}\left|\int_0^t||p_s^{\e}||^2\left(||\partial_s f||_{\infty}+||\nabla_q f||_{\infty}|\nabla_pK^\e(s,x_s^\e)|\right)ds\right|^\th\right]\right)\\
&\le 3^{\th-1}\left(2||f||_{\infty}^\th\E[\sup_{t\in[0,T]}||p_t^{\e}||^{2\th}]+ \E\left[\sup_{t\in[0,T]}\left|\int_0^t||p_s^{\e}||^2\left(||\partial_s f||_{\infty}+||\nabla_q f||_{\infty}\frac{1}{\sqrt{\e}}(M_1+CK^{\e}(s,x_s^{\e}))\right)ds\right|^\th\right]\right)\\
&\le  3^{\th-1}2||f||_{\infty}^\th\E[\sup_{t\in[0,T]}||p_t^{\e}||^{2\th}]+6^{\th-1}T^{\th-1}\E \left[\int_0^T||p_s^\e||^{2\th}\left(||\partial_s f||_{\infty}^\th+M_1^\th||\nabla_q f||_{\infty}^\th\e^{-\frac{\th}{2}}+C^\th K^\e(s,x_s^\e)^\th\e^{-\frac{\th}{2}}\right)ds\right].
\end{aligned}
\end{equation}
\qed

{\bf{Proof of \eqref{J7}}.}
Applying Kunita's first inequality \cite{applebaum2009levy} on $J_7$, we have
\begin{equation}
\begin{aligned}
J_7&=2^{\th-1}\E\left[\sup_{t\in[0,T]}\left|\left|\int_0^t\int_{\mathbb{R}^d\backslash \{0\}}\partial_{q^h}(\gamma^{-1})_i^j(s,q_s^{\e})G_{jh}^{ab}(s,q_s^{\e})(p_{s-}^{\e})_a\sigma_b^{\rho}(s,x_s^{\e})x\widetilde N(ds,dx)\right|\right|^\th\right.\\
&+\left.\sup_{t\in[0,T]}\left|\left|\int_0^t\int_{|x|>1}\partial_{q^h}(\gamma^{-1})_i^j(s,q_s^{\e})G_{jh}^{ab}(s,q_s^{\e}) (p_{s-}^{\e})_a\sigma_b^{\rho}(s,x_s^{\e})x\nu(dx)ds\right|\right|^\th\right]\\
&\le 2^{\th-1}D(\th)\E\left[\left(\int_0^T\int_{\mathbb{R}^d\backslash \{0\}}\left|\left|\partial_{q^h}(\gamma^{-1})_i^j(s,q_s^{\e})G_{jh}^{ab}(s,q_s^{\e}) (p_{s-}^{\e})_a\sigma_b^{\rho}(s,x_s^{\e})x\right|\right|^2\nu(dx)ds\right)^{\frac{\th}{2}}\right]\\
&+2^{\th-1}\E\left[\int_0^T\int_{\mathbb{R}^d\backslash \{0\}}\left|\left|\partial_{q^h}(\gamma^{-1})_i^j(s,q_s^{\e})G_{jh}^{ab}(s,q_s^{\e}) (p_{s-}^{\e})_a\sigma_b^{\rho}(s,x_s^{\e})x\right|\right|^\th\nu(dx)ds\right]\\
&+2^{\th-1}T^{\th}C\left(\int_{|x|>1}|x|\nu(dx)\right)^\th\E\left[\sup_{t\in[0,T]}||p_t^{\e}||^\th\right]\\
&\le 2^{\th-1}\left(D(\th)T^{\frac{\th}{2}}C\int_{\mathbb{R}\backslash\{0\}}|x|^2\nu(dx)^{\frac{\th}{2}}+ TC\int_{\mathbb{R}\backslash\{0\}}|x|^\th\nu(dx)+T^\th C\left(\int_{|x|>1}|x|\nu(dx)\right)^\th\right)\sup_{t\in[0,T]}\E\left[||p_t^{\e}||^\th\right].
\end{aligned}
\end{equation}
We have to mention that Kunita's first inequality holds for $\th\ge2$. Actually $J_7\le\tilde{C}\sup_{t\in[0,T]}\E\left[||p_t^{\e}||^\th\right]$ still holds for $\th\in[1,2)$ since $\sup_{t\in[0,T]}\E\left[||p_t^{\e}||^\th\right]=O(\e^{\frac{\th}{2}})$ for $\th\in(0,2\eta)$.
\qed
\end{appendices}

\section*{References}

\bibliographystyle{elsarticle-num}
\bibliography{homogenization}

\end{document}